\newcommand{\sh}{\ensuremath{\bb{S}^2\times \bb{H}^2}}
\newcommand{\nc}[1]{\ensuremath{\nabla_{#1}^\bot}}                  
\newcommand{\ip}[3][]{\ensuremath{\langle#2,#3\rangle}}   
\newcommand{\bb}[1]{\ensuremath{\mathbb{#1}}}                        
\newcommand{\Si}{\ensuremath{\Sigma}}                                
\newcommand{\con}[1]{\ensuremath{\nabla_{#1}}}
\newcommand{\conMix}{\ensuremath{\overline{\nabla}}}
\newcommand{\conM}[1]{\ensuremath{\tilde{\nabla}_{#1}}}
\newcommand{\dz}{\ensuremath{\partial_z}}
\newcommand{\dzbar}{\ensuremath{\partial_{\bar{z}}}}
\newcommand{\dx}{\ensuremath{\partial_x}}
\newcommand{\dy}{\ensuremath{\partial_y}}
\newcommand{\zbar}{\ensuremath{\bar{z}}}
\newcommand{\norm}[1]{\left\lVert#1\right\rVert}
\renewcommand{\epsilon}{\varepsilon} 
\newcommand{\R}{\ensuremath{\mathbb{R}}}
\newcommand{\C}{\ensuremath{\mathbb{C}}}
\renewcommand{\H}{\ensuremath{\mathbb{H}}}
\newtheorem{theorem}{Theorem}[section] 
\newtheorem*{theorem*}{Theorem} 
\newtheorem{lemma}[theorem]{Lemma}
\newtheorem{proposition}[theorem]{Proposition}
\theoremstyle{definition}
\newtheorem{corollary}[theorem]{Corollary}
\newtheorem{remark}[theorem]{Remark}
\numberwithin{equation}{section}
\title[Spheres with parallel mean curvature in $\bb{S}^2\times \bb{H}^2$]{Spheres with parallel mean curvature in $\bb{S}^2\times \bb{H}^2$ }
\author{Giel Stas \and Joeri Van der Veken}
\address{G. Stas \and J. Van der Veken, KU\ Leuven, Department of Mathematics, Celestijnenlaan 200B -- Box 2400, 3001 Leuven, Belgium}
\email{giel.stas@kuleuven.be, joeri.vanderveken@kuleuven.be}
\thanks{J. Van der Veken is supported by the Research Foundation-Flanders (FWO) and the Fonds de la Recherche Scientifique (FNRS) under EOS project G0I2222N and by the KU Leuven Research Fund under project 3E210539}
\subjclass[2010]{Primary 53C42, 53A10; Secondary 53C40}
\keywords{Parallel mean curvature, surface, Riemannian product space, Hopf differential}
\date{}
\begin{document}

\begin{abstract}
It is known that a surface with parallel mean curvature vector field in a Riemannian product of two surfaces of constant Gaussian curvature carries a holomorphic quadratic differential. In this paper we consider the Riemannian product of a sphere and a hyperbolic plane of opposite Gaussian curvatures and study the parallel mean curvature surfaces for which the differential vanishes. In particular, we classify all parallel mean curvature spheres, for which the differential vanishes for topological reasons.
\end{abstract}

\maketitle

\section{Introduction}

Surfaces with \emph{constant mean curvature} (CMC) are important objects of study in geometry and analysis with several applications in science and technology. One of the main reasons for this is that they are solutions to a variational problem. Namely, closed CMC surfaces in three-dimensional ambient spaces are critical points of the area functional under the restriction that the enclosed volume is fixed. One of the first major results in the study of CMC surfaces was obtained by Heinz Hopf in 1951 \cite{Hopf}: he proved that a CMC surface in Euclidean three-space $\R^3$ with the topology of a sphere must be a round sphere. He did this by constructing a holomorphic quadratic differential on any CMC surface in $\R^3$, which must vanish if the surfaces has the topology of a sphere. Inspired by this technique, the use of holomorphic quadratic differentials is now  common in the study of CMC surfaces and they are often referred to as \emph{Hopf differentials}. 

For submanifolds of higher codimension, the mean curvature is not merely a function on the submanifold, but a normal vector field. A submanifold is called CMC if this vector field has constant length. A more restrictive condition is that the mean curvature vector field is parallel in the normal bundle. Such submanifolds are called \textit{parallel mean curvature} (PMC) submanifolds. The use of Hopf differentials turns out to be useful when studying PMC surfaces in real space forms: Ferus and Hoffman classified PMC spheres in real space forms of arbitrary dimension and arbitrary constant sectional curvature in \cite{Ferus71} and \cite{Hoffman73}. This was later generalized to a full classification of PMC surfaces in real space forms by Chen and Yau, see \cite{Chen73} and \cite{Yau74}.

More recently, several authors started studying CMC and PMC surfaces in ambient spaces of non-constant sectional curvature. A natural family of ambient spaces to start this study are the Riemannian products of two real space forms. In 2004, Abresch and Rosenberg introduced a Hopf differential for CMC surfaces in $\mathbb S^2(c) \times \R$ and $\mathbb H^2(-c) \times \R$ in \cite{AbrRos}. They also classified CMC surfaces with vanishing Hopf differentials, including the CMC spheres. Leite further discussed this topic in \cite{Leite}. A discussion of PMC surfaces in $\mathbb S^n(c) \times \R$ and $\mathbb H^n(-c) \times \R$ for $n>2$ can be found in \cite{AdCT}. In particular, Alencar, do Carmo and Tribuzy proved a reduction of codimension, which reduces the problem to a classification in $\mathbb S^4(c) \times \R$ and $\mathbb H^4(-c) \times \R$.

In 2011, Torralbo and Urbano found two Hopf differentials on any PMC surface in $\mathbb S^2(c) \times \mathbb S^2(c)$ or $\mathbb H^2(-c) \times \mathbb H^2(-c)$ which are generally linearly independent. They used these to classify the PMC surfaces for which the differentials vanish, in particular the PMC spheres \cite{TorralboUrbano}. It is essential for their approach that both factors of the ambient space have the same constant Gaussian curvature, which makes the space Einstein. However, around the same time, Kowalczyk  \cite{ThesisKowalczyk} and de Lira and Vitório \cite{LiraVitorio} independently found one single Hopf differential on any PMC surface in a product of two space forms of any dimension and any constant sectional curvatures. In \cite{LiraVitorio}, this Hopf differential is used to classify the PMC spheres of $\mathbb{S}^3(c)\times \mathbb{R}$ and $\mathbb{H}^3(c)\times \mathbb{R}$.

In the present paper, we consider the PMC surfaces in $\mathbb S^2(c) \times \mathbb H^2(-c)$ whose Hopf differential $\Theta$ vanishes. In Section \ref{trivialDifferential}, we introduce a frame which is particularly well adapted to the problem. This allows us to show in Theorem \ref{nonTrivialIntersection} that any tangent space to such a PMC surface intersects the tangent space to either the $\mathbb S^2(c)$- or the $\mathbb H^2(-c)$-component non-trivially. In particular, the following inclusions hold: \begin{multline*}
		\lbrace \text{PMC spheres} \rbrace \subseteq \lbrace \text{PMC surfaces with vanishing Hopf differential } \Theta \rbrace \\
		\subseteq \left\lbrace \begin{array}{c} \text{PMC surfaces } \Sigma \text{ such that for every }p\in\Sigma,~T_p\Sigma \text{ intersects } \\ T_{\pi_1(p)}\bb{S}^2(c)\times \lbrace 0 \rbrace \text{ or } \lbrace 0 \rbrace\times T_{\pi_2(p)}\bb{H}^2(-c) \text{ non-trivially} \end{array} \right\rbrace.
\end{multline*}
Here, $\pi_1$ and $\pi_2$ denote the projections onto the $\bb{S}^2(c)-$ and $\H^2(-c)$-component, respectively. Lemma \ref{Csquared} relates this last family to the so called \emph{K\"ahler functions}, which can be defined on any surface in $\mathbb S^2(c) \times \mathbb H^2(-c)$. From that lemma, it can be seen that the last family coincides with the family of PMC surfaces studied in \cite{TorralboUrbano} within the ambient spaces $\bb{S}^2(c)\times \bb{S}^2(c)$ and $\H^2(-c)\times \H^2(-c)$. This leads to examples of PMC surfaces in Section~\ref{examples} and to the classification of all three families of PMC surfaces mentioned above in Section~\ref{classification}. In particular, this includes the PMC surfaces with vanishing Hopf differential and the PMC spheres. The main result of the present paper is Theorem \ref{MainClassification}, which is an analog to \cite[Theorem 3]{TorralboUrbano}. It classifies the PMC surfaces $\Sigma$ such that for every $p\in\Sigma,$ the tangent space $T_p\Sigma$ intersects $T_{\pi_1(p)}\bb{S}^2(c)\times \lbrace 0 \rbrace$ or $\lbrace 0 \rbrace\times T_{\pi_2(p)}\bb{H}^2(-c)$ non-trivially, which is the last family mentioned above. Such a surface is locally congruent to one of the following:
\begin{enumerate}
\item a CMC surface in a totally geodesic $\mathbb S^2(c)\times\gamma$, where $\gamma$ is a geodesic of $\mathbb H^2(-c)$,
\item a CMC surface in a totally geodesic $\gamma\times\mathbb H^2(-c)$, where $\gamma$ is a geodesic of $\mathbb S^2(c)$,
\item a product of curves with constant curvatures in $\mathbb S^2(c)$ and $\mathbb H^2(-c)$,
\item a member of a more involved family of surfaces, see Proposition \ref{SpecialSurfaces1} and Proposition \ref{SpecialSurfaces2}.
\end{enumerate}
The PMC spheres belong to the first two families in the list above, i.e., they arise from CMC spheres in $\mathbb S^2(c) \times \R$ or $\mathbb H^2(-c) \times \R$, which are classified in \cite{AbrRos}.

It should be noted that the results in Sections \ref{examples} and \ref{classification} are similar to the corresponding results for $\mathbb S^2(c) \times \mathbb S^2(c)$ and $\mathbb H^2(-c) \times \mathbb H^2(-c)$ in \cite{TorralboUrbano}. Therefore, analogous proofs are omitted. The result on the intersections of the tangent spaces proven in Section \ref{trivialDifferential} requires vastly different techniques. This is because we only have one Hopf differential, rather than two. Furthermore, the expression for the differential as well as some formulas are slightly different. One of the main classification results in \cite{TorralboUrbano}, Theorem 3, relies in a minor way on the two Hopf differentials, too. In the proof Theorem \ref{MainClassification}, we describe a small but necessary adaptation to generalize the classification to our setting. It should also be noted that the classification of PMC spheres in a product of two surfaces of constant Gaussian curvatures which are neither equal, nor opposite, remains an open problem.

\section{Preliminaries}

\subsection{Isometric immersions and mean curvature conditions} Let $\Phi: M \to \tilde M$ be an isometric immersion between Riemannian manifolds. We will often (locally) identify $M$ with its image under $\Phi$ and denote both Riemannian metrics by $\ip{~}{~}$. The Levi-Civita connections of $M$ and $\tilde M$ are denoted by $\nabla$ and $\tilde{\nabla}$, respectively. The \emph{second fundamental form} $h$, the \emph{shape operators} $A_{\xi}$ and the \emph{normal connection} $\nabla^{\perp}$ are defined via the formulas of Gauss and Weingarten as follows. Let $X$ and $Y$ be tangent vector fields to $M$ and let $\xi$ be a normal vector field to $M$, then one can decompose both $\tilde\nabla_XY$ and $\tilde\nabla_X\xi$ in a component tangent to $M$ and a component normal to $M$:
\begin{align}
& \tilde\nabla_XY = \nabla_XY + h(X,Y), \label{eq:Gauss}\\
& \tilde\nabla_X\xi = -A_{\xi}X + \nabla^{\perp}_X\xi.
\label{eq:Weingarten}
\end{align}
The shape operators are related to the second fundamental form by
\begin{equation} \label{eq:relationAh}
\langle A_{\xi}X,Y \rangle = \langle h(X,Y),\xi \rangle. 
\end{equation}

An important extrinsic invariant of an isometric immersion $\Phi: M \to \tilde M$ is the \emph{mean curvature}
$$ H = \frac{1}{\dim M} \, \mathrm{tr}(h), $$
which is a vector field along $\Phi$ taking values in the normal bundle. The immersion $\Phi$ is called \emph{minimal} if $H$ vanishes and we say that $\Phi$ has \emph{constant mean curvature} (CMC) if $H$ has constant length. The mean curvature condition that we consider in the present paper is the following: $\Phi$ has \emph{parallel mean curvature} (PMC) if $\nc{~} H = 0$, while $H \neq 0$. For hypersurfaces, the notions of PMC and CMC are equivalent. For higher codimensions, the PMC condition implies the CMC condition, but the converse does not hold in general.

\begin{remark}
We exclude minimal immersions in the definition of PMC immersions, as certain techniques we use later on do not apply to them. This is a common convention in the literature.
\end{remark}

The following proposition is a classical result, a proof of which can for example be found in \cite[Proposition 3.1 and Remark 3.2]{PMC}. It can be used to generate examples of PMC surfaces and various classification results build on this statement.

\begin{proposition}\label{TrivialPMC}
Let $\tilde M$ be a four-dimensional manifold and $M$ a totally umbilical CMC hypersurface of $\tilde M$. Then every CMC surface immersed in $M$ is either a PMC surface or a minimal surface in $\tilde M$. The latter only occurs if the surface is minimal in $M$ and $M$ is totally geodesic in $\tilde M$.
\end{proposition}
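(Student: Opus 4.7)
The plan is to decompose the second fundamental form and mean curvature of $\Sigma$ in $\tilde M$ using the Gauss formula twice along the chain of immersions $\Sigma \hookrightarrow M \hookrightarrow \tilde M$. Pick a local unit normal $N$ to $M$ in $\tilde M$ and a local unit normal $\nu$ to $\Sigma$ in $M$, so that $\{\nu,N\}$ is an orthonormal frame for the normal bundle of $\Sigma$ in $\tilde M$. Since $M$ is totally umbilical and CMC, the second fundamental form of $M\hookrightarrow\tilde M$ is of the form $\lambda\,\ip{\cdot}{\cdot}N$ for a constant $\lambda\in\R$. Writing the mean curvature of $\Sigma$ in $M$ as $H^M = \mu\nu$ (so that $\mu^2$, and hence $\mu$ itself by continuity on connected components, is constant), I would apply Gauss in $M$ followed by Gauss in $\tilde M$ to pairs $(X,Y)\in \vf{\Si}\times\vf{\Si}$ to obtain
\begin{equation*}
h(X,Y) = h^M(X,Y) + \lambda\,\ip{X}{Y}\,N, \qquad H = \mu\,\nu + \lambda\,N.
\end{equation*}

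The next step is to compute $\conM{X}H$ for $X\in\vf{\Si}$ and project onto the normal bundle of $\Sigma$ in $\tilde M$. Weingarten in $\tilde M$ applied to the totally umbilical $M$ yields $\conM{X}N = -\lambda X$, which is tangent to $\Sigma$. Combining Weingarten in $M$ with Gauss in $\tilde M$ for $\nu$, and noting that the cross term $\lambda\,\ip{X}{\nu}N$ vanishes since $\nu\perp X$, one finds $\conM{X}\nu = -A_\nu^M X$, again tangent to $\Sigma$. Since $\lambda$ and $\mu$ are constant, these two computations give
\begin{equation*}
\conM{X} H = -\mu\, A_\nu^M X - \lambda^2 X,
\end{equation*}
which is entirely tangent to $\Sigma$. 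Hence $\nc{X}H = 0$, i.e., $H$ is parallel in the normal bundle.

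If $H\neq 0$ the immersion $\Sigma\hookrightarrow \tilde M$ is PMC by definition. If $H = 0$, then linear independence of $\nu$ and $N$ forces $\mu = 0$ and $\lambda = 0$, which is precisely to say that $\Sigma$ is minimal in $M$ and that $M$ is totally geodesic in $\tilde M$. I do not expect a genuine obstacle in this argument: it is essentially a two-step application of the Gauss--Weingarten equations. The only care required is to keep track of which contributions are tangent versus normal to $\Sigma$ in $\tilde M$, and to use the CMC hypotheses on $M\hookrightarrow\tilde M$ and on $\Sigma\hookrightarrow M$ to upgrade ``constant length'' to ``locally constant'' for both $\lambda$ and $\mu$.
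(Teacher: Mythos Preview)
Your argument is correct: decomposing $H=\mu\,\nu+\lambda\,N$ with $\mu,\lambda$ constant and checking that both $\tilde\nabla_X\nu$ and $\tilde\nabla_XN$ are tangent to $\Sigma$ is exactly the right computation, and the final dichotomy follows immediately from the linear independence of $\nu$ and $N$. Note that the paper does not supply its own proof here; it simply cites \cite[Proposition 3.1 and Remark 3.2]{PMC} for this classical fact, so your write-up is more explicit than what appears in the text.
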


Note that an immersion is \emph{totally geodesic} if all its shape operators, and hence its second fundamental form, vanish. More generally, it is \emph{totally umbilical} if all its shape operators are scalar multiples of the identity. Further on, we will use \textit{pseudo-umbilical} immersions, defined by the property that the shape operator associated to the mean curvature, $A_H$, is a multiple of the identity. In that case, one necessarily has $A_H = \norm{H}^2 \text{id}$.

\subsection{The product space $\bb{S}^2\times \bb{H}^2$} In this paper, we consider PMC surfaces in $\mathbb S^2(c) \times \mathbb H^2(-c)$, the Riemannian product of a 2-sphere and a hyperbolic plane with opposite Gaussian curvatures. Since the PMC condition is invariant under homotheties of the ambient space, we may assume that $c=1$. For brevity, we will denote $\mathbb S^2(1)$ and $\mathbb H^2(-1)$ by $\mathbb S^2$ and $\mathbb H^2$ respectively.

In reference to Proposition \ref{TrivialPMC}, it was shown in \cite[Proposition 1]{TorralboUrbano} that a totally umbilical CMC hypersurface of $\mathbb S^2 \times \mathbb S^2$ or $\mathbb H^2 \times \mathbb H^2$ must be  totally geodesic. The following proposition is the corresponding result for $\sh$. Since its proof is similar to the one given in \cite{TorralboUrbano}, we omit it here.

\begin{proposition}\label{CMCtu}
Let $\Phi\colon M \to \sh$ be a totally umbilical CMC hypersurface. Then $\Phi$ is totally geodesic and it is locally congruent to one of the following:
\begin{enumerate}
\item $\Phi_1: \bb{S}^2 \times \bb{R} \to \mathbb S^2 \times \mathbb H^2: (p,t) \mapsto (p,\gamma(t))$, the product of $\bb{S}^2$ and a geodesic $\gamma$ of $\bb{H}^2$,
\item $\Phi_2: \bb{H}^2 \times \bb{R} \to \mathbb S^2 \times \mathbb H^2: (p,t) \mapsto (\gamma(t),p)$, the product of a geodesic $\gamma$ of $\bb{S}^2$ and $\bb{H}^2$.
\end{enumerate}
\end{proposition}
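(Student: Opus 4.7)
The plan is to reduce the statement to the Codazzi equation and then exploit the product structure of $\sh$. Let $\xi$ be a local unit normal field along $M$; the totally umbilical CMC assumption means $A_\xi = \lambda\,\mathrm{id}$ with $\lambda \in \R$ constant, so $(\con{X}A_\xi)Y - (\con{Y}A_\xi)X = 0$ for all tangent $X, Y$. The Codazzi equation then reduces to the requirement that $\tilde R(X,Y)\xi$ be normal to $M$, i.e.\ proportional to $\xi$, for every pair of tangent vectors $X, Y$.

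Next, I would use the product decomposition. For each vector $V$, write $V = V_1 + V_2$ with $V_1$ tangent to $\bb{S}^2$ and $V_2$ tangent to $\bb{H}^2$. Because the two factors have constant curvatures of opposite sign, the ambient curvature tensor is
\[
\tilde R(X,Y)Z = \ip{Y_1}{Z_1}X_1 - \ip{X_1}{Z_1}Y_1 - \ip{Y_2}{Z_2}X_2 + \ip{X_2}{Z_2}Y_2.
\]
Using $\ip{V_1}{\xi_1} + \ip{V_2}{\xi_2} = 0$ for every tangent $V$, a short computation rewrites the condition $\ip{\tilde R(X,Y)\xi}{Z} = 0$ as
\[
\ip{X_2}{\xi_2}\ip{Y}{Z} - \ip{Y_2}{\xi_2}\ip{X}{Z} = 0
\]
for all tangent $X, Y, Z$. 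Choosing $Y$ tangent with $Y \perp X$ and $Z = Y$ forces $\ip{V_2}{\xi_2} = 0$ for every tangent $V$, and symmetrically $\ip{V_1}{\xi_1} = 0$.

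The classification then follows from a dimension count. If both $\xi_1$ and $\xi_2$ were nonzero at some $p$, each of the projections of $T_pM$ onto a factor would lie in a one-dimensional subspace, so $T_pM$ would have to contain both $T_{\pi_1(p)}\bb{S}^2$ and $T_{\pi_2(p)}\bb{H}^2$ in full, contradicting $\dim T_pM = 3$. Hence at each point exactly one of $\xi_1, \xi_2$ vanishes; the two zero sets are closed, disjoint, and cover $M$, so connectedness forces one of them to be all of $M$. Say $\xi_1 \equiv 0$. Then $T_pM \supseteq T_{\pi_1(p)}\bb{S}^2$ everywhere, the $\bb{S}^2$-distribution is integrable, and $M$ is locally $\bb{S}^2 \times \gamma$ for some curve $\gamma$ in $\bb{H}^2$. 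Since $\xi$ at $(p,q)$ is the unit normal of $\gamma$ in $\bb{H}^2$ at $q$ and is independent of the $\bb{S}^2$-coordinate, $\conM{T}\xi = 0$ for every $T$ tangent to $\bb{S}^2$, so $A_\xi T = 0$ and thus $\lambda = 0$; hence $M$ is totally geodesic in $\sh$ and $\gamma$ must be a geodesic of $\bb{H}^2$, giving case~(1). Case~(2) is analogous. The main obstacle is the algebraic step in the second paragraph: since $\sh$ is not Einstein, the Codazzi identity does not collapse as easily as in $\bb{S}^2 \times \bb{S}^2$ or $\bb{H}^2 \times \bb{H}^2$, and the opposite signs of the two curvature contributions must be tracked carefully to arrive at the clean pointwise identity $\ip{V_i}{\xi_i} = 0$.
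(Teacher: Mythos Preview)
Your argument is correct and matches the approach the paper points to: the proof is omitted there with a reference to \cite[Proposition~1]{TorralboUrbano}, and your route via the Codazzi equation, the product curvature tensor, and the resulting pointwise identity $\ip{V_i}{\xi_i}=0$ is precisely that method adapted to the opposite-sign curvatures of $\sh$. One minor simplification in the dimension count: once $\ip{V_1}{\xi_1}=\ip{V_2}{\xi_2}=0$ holds for all tangent $V$, the case $\xi_1,\xi_2\neq 0$ is ruled out immediately by $T_pM\subseteq \xi_1^{\perp}\times\xi_2^{\perp}$, which is only a $2$-plane.
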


We have thus found a first type of PMC surface in $\sh$: non-minimal CMC surfaces in $\bb{S}^2\times \bb{R}$ and $\bb{H}^2 \times \bb{R}$ give rise to PMC surfaces in $\sh$ by composing their immersions with $\Phi_1$ or $\Phi_2$. For a classification of CMC surfaces in $\bb{S}^2\times \bb{R}$ and $\bb{H}^2 \times \bb{R}$, see \cite{AbrRos}.

Let $\pi_1: \sh \to \mathbb S^2$ and $\pi_2: \sh \to \mathbb H^2$ denote the projections on the two factors. Any point $p\in \sh$ may be written as $p=(\pi_1(p),\pi_2(p))$. The tangent space can be identified with a product of tangent spaces, namely $T_p(\sh)\cong T_{\pi_1(p)}\bb{S}^2 \times T_{\pi_2(p)} \bb{H}^2$. Using this identification, the \emph{product structure} of $\sh$ is the $(1,1)$-tensor field $F$ defined by
\begin{equation} \label{eq:defF}
F : T(\sh) \to T(\sh): (v_1,v_2)_p \mapsto (v_1,-v_2)_p.
\end{equation}
Note that $F$ is symmetric, squares to the identity and is parallel with respect to the Levi-Civita connection.
Using $F$, one has the following expression for the curvature tensor of $\sh$, see for example \cite[equation (2.36)]{ThesisKowalczyk}.

\begin{lemma}
The Riemann-Christoffel curvature tensor $\tilde R$ of $\sh$ is given by
\begin{equation}\label{eq:Rtilde}
\tilde R(X,Y) = \frac{X+FX}{2} \wedge \frac{Y+FY}{2} - \frac{X-FX}{2} \wedge \frac{Y-FY}{2}, \end{equation}
where $(A \wedge B)C = \langle B,C \rangle A - \langle A,C \rangle B$ for all tangent vectors $A$, $B$ and $C$ to $\sh$.
\end{lemma}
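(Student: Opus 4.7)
The plan is to exploit the fact that $\mathbb{S}^2 \times \mathbb{H}^2$ is a Riemannian product, so that its curvature tensor decouples into the curvatures of the two factors evaluated on the corresponding tangential components. The product structure $F$ will then be used to express these component projections in a single global formula.

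First I would recall the standard fact about Riemannian products: if $\tilde{M} = M_1 \times M_2$ with Levi-Civita connection $\tilde{\nabla}$ equal to the product of the connections $\nabla^1$ and $\nabla^2$, then
\begin{equation*}
\tilde{R}(X,Y)Z \;=\; R_1(X_1,Y_1)Z_1 \;+\; R_2(X_2,Y_2)Z_2,
\end{equation*}
where $X = (X_1, X_2)$ denotes the splitting into tangent components of the two factors (and similarly for $Y$ and $Z$). Next, from the definition \eqref{eq:defF} of $F$, the two projections onto the factors are
\begin{equation*}
P_1 X \;=\; (X_1,0) \;=\; \frac{X + FX}{2}, \qquad P_2 X \;=\; (0,X_2) \;=\; \frac{X - FX}{2}.
\end{equation*}
These projections are symmetric with respect to the product metric, and vectors in different factors are orthogonal, so for instance $\langle P_1 Y, Z\rangle = \langle P_1 Y, P_1 Z\rangle$.

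Then I would use that $\mathbb{S}^2$ has constant Gaussian curvature $+1$ and $\mathbb{H}^2$ has constant Gaussian curvature $-1$, so in each factor the curvature tensor is the wedge operator (up to sign):
\begin{equation*}
R_1(X_1,Y_1)Z_1 \;=\; (X_1 \wedge Y_1)Z_1, \qquad R_2(X_2,Y_2)Z_2 \;=\; -(X_2 \wedge Y_2)Z_2.
\end{equation*}
Substituting the projection formulas and using that $\langle P_i Y, Z\rangle = \langle P_i Y, P_i Z\rangle$ to replace $Z_i$ by $Z$ inside the wedge operator, I obtain
\begin{equation*}
\tilde{R}(X,Y) \;=\; \frac{X+FX}{2} \wedge \frac{Y+FY}{2} \;-\; \frac{X-FX}{2} \wedge \frac{Y-FY}{2},
\end{equation*}
which is the stated formula.

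The argument is essentially a direct calculation, so there is no real obstacle. The only point that requires a little care is verifying that the wedge $\wedge$ defined globally on $\sh$ in the statement agrees, when restricted to each factor, with the wedge appearing in the constant-curvature formula for $R_i$: this works precisely because mixed inner products vanish and $P_i$ is self-adjoint, which lets one freely enlarge $Z_i$ to $Z$ in the inner products on the right-hand side.
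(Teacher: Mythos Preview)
Your argument is correct and is the standard way to derive this formula. Note that the paper itself does not give a proof of this lemma; it merely states the result and cites \cite{ThesisKowalczyk}, so there is no in-paper proof to compare against. Your derivation---splitting the product curvature into factor curvatures, inserting the constant-curvature formulas $R_1=+1\cdot\wedge$ and $R_2=-1\cdot\wedge$, and using self-adjointness of the projections $P_i=(\mathrm{id}\pm F)/2$ to replace $Z_i$ by $Z$ inside the wedge---is exactly how one proves such a statement.
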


It is easy to see that, at any point of $\sh$, the derivatives are $d\pi_1 = (\mathrm{id}+F)/2$ and $d\pi_2 = (\mathrm{id}-F)/2$. Therefore, the following alternative expressions for $\tilde R$ follow immediately from \eqref{eq:Rtilde}:
\begin{align*}\label{LTVCurvature}
& \tilde R(X,Y) = (d\pi_1)X \wedge (d\pi_1)Y - (d\pi_2)X \wedge (d\pi_2)Y, \\
& \tilde R(X,Y) = - X \wedge Y + (d\pi_1)X \wedge Y + X \wedge (d\pi_1) Y , \\
& \tilde R(X,Y) = X \wedge Y - (d\pi_2)X \wedge Y - X \wedge (d\pi_2)Y .
\end{align*}
The last expression can be found in \cite[equation (2.1)]{LTV}.

With these formulas for $\tilde R$, one can write the Codazzi and Ricci equations for isometric immersions into $\sh$ as follows, see \cite[Section 2.3]{ThesisKowalczyk}.

\begin{lemma}\label{Codazzi}
Let $\Phi: M \to \sh$ be an isometric immersion with second fundamental form~$h$. Then, for all vectors $X$, $Y$ and $Z$ tangent to $M$,
\begin{equation}\label{eq:codazzi}
(\conMix h)(X,Y,Z) - (\conMix h)(Y,X,Z) = \frac 12 \left( F(X\wedge Y)Z \right)^\bot, \end{equation}
where $(\overline{\nabla}h)(A,B,C)=\nabla^{\perp}_Ah(B,C) - h(\nabla_AB,C) - h(B,\nabla_AC)$ for all vectors $A$, $B$ and $C$ tangent to $M$ and the superscript $\perp$ on the right hand side of \eqref{eq:codazzi} denotes the component normal to $M$.
\end{lemma}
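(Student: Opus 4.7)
The plan is to derive this formula directly from the general (ambient) Codazzi equation, using the explicit expression for $\tilde R$ from the preceding lemma. Since this is just the standard Codazzi identity specialized to the product space $\sh$, no deep new idea is required; the main thing to track carefully is the projection onto the normal bundle, which is where the simplification comes from.

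First, I would invoke the general Codazzi equation, valid for any isometric immersion $\Phi: M \to \tilde M$:
\[
(\conMix h)(X,Y,Z) - (\conMix h)(Y,X,Z) = \bigl(\tilde R(X,Y)Z\bigr)^\bot.
\]
It therefore suffices to show that $(\tilde R(X,Y)Z)^\bot = \frac{1}{2}\bigl(F(X \wedge Y)Z\bigr)^\bot$ whenever $X, Y, Z$ are tangent to $M$.

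Next, I would simplify \eqref{eq:Rtilde}. Expanding the two wedges by bilinearity, the $X \wedge Y$ and $FX \wedge FY$ contributions cancel between the two halves, leaving
\[
\tilde R(X,Y) = \frac{1}{2}\bigl(FX \wedge Y + X \wedge FY\bigr).
\]
Applying this to $Z$ and using the definition $(A \wedge B)C = \ip{B}{C} A - \ip{A}{C} B$, one obtains
\[
\tilde R(X,Y)Z = \frac{1}{2}\bigl(\ip{Y}{Z}\,FX - \ip{FX}{Z}\,Y + \ip{FY}{Z}\,X - \ip{X}{Z}\,FY\bigr).
\]

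Finally, I would take the component normal to $M$. Since $X, Y, Z \in TM$, the two middle terms $\ip{FX}{Z}\,Y$ and $\ip{FY}{Z}\,X$ are scalar multiples of tangent vectors and are killed by $(\cdot)^\bot$. The two surviving terms assemble as $\frac{1}{2}F\bigl(\ip{Y}{Z}\,X - \ip{X}{Z}\,Y\bigr) = \frac{1}{2}F(X \wedge Y)Z$, which gives the Codazzi formula as stated. Note that $FX$ and $FY$ are not expected to lie in $TM$ in general --- this failure of $F$ to preserve $TM$ is precisely what makes these terms carry the curvature information into the normal bundle, and it is the geometric reason behind the simple right hand side. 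The only small pitfall to verify is a consistent sign convention for $(A \wedge B)C$ and for the Codazzi equation, but both are fixed by the excerpt.
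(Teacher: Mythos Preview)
Your proof is correct. The paper does not actually give its own proof of this lemma; it only cites \cite[Section 2.3]{ThesisKowalczyk}, so your explicit derivation---applying the general Codazzi equation and simplifying \eqref{eq:Rtilde} to $\tilde R(X,Y) = \tfrac{1}{2}(FX \wedge Y + X \wedge FY)$, then discarding the tangent terms upon normal projection---is exactly the standard argument one would expect, and fills in what the paper leaves to the reference.
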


\begin{lemma}\label{Ricci}
Let $\Phi: M \to \sh$ be an isometric immersion. Assume that $X$ and $Y$ are tangent vectors to $M$ and that $\xi$ and $\eta$ are normal vectors to $M$. Then, the normal curvature tensor $R^\bot$, i.e., the curvature tensor associated to the normal connection $\nabla^{\perp}$, satisfies
\begin{equation}
\langle R^\bot(X,Y)\xi,\eta\rangle = \langle [A_\xi,A_\eta]X,Y\rangle.
\end{equation}
In particular, if $\Phi\colon\Si\to \sh$ is a PMC surface, then $[A_H,A_\eta] = 0$ for any normal vector $\eta$.
\end{lemma}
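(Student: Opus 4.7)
The strategy is to derive the standard Ricci identity valid for an isometric immersion into any Riemannian manifold, and then show that the ambient-curvature contribution vanishes on tangent-normal inputs using the specific structure of $\tilde R$ in $\sh$.

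First I would compute $\tilde R(X,Y)\xi$ in the usual two-step fashion: starting from the definition
\begin{equation*}
\tilde\nabla_X \tilde\nabla_Y \xi - \tilde\nabla_Y \tilde\nabla_X \xi - \tilde\nabla_{[X,Y]}\xi,
\end{equation*}
expand each term using the Weingarten formula \eqref{eq:Weingarten} (and the Gauss formula \eqref{eq:Gauss} where shape operators get further differentiated), and extract the normal component. Pairing the result with $\eta$ and simplifying the remaining shape-operator terms via \eqref{eq:relationAh} and the symmetry of $A_\xi, A_\eta$ produces the general Ricci identity
\begin{equation*}
\langle R^\perp(X,Y)\xi, \eta\rangle = \langle \tilde R(X,Y)\xi, \eta\rangle + \langle [A_\xi, A_\eta]X, Y\rangle.
\end{equation*}
This step is standard and uses none of the special geometry of $\sh$.

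The key step is to show that the ambient-curvature term vanishes, i.e.\ $\langle \tilde R(X,Y)\xi, \eta\rangle = 0$ whenever $X,Y$ are tangent and $\xi,\eta$ are normal to $M$. Among the equivalent expressions for $\tilde R$ listed just before the lemma, the cleanest to work with here is
\begin{equation*}
\tilde R(X,Y) = X\wedge Y - (d\pi_2)X\wedge Y - X\wedge (d\pi_2)Y,
\end{equation*}
since applying $\langle (\cdot)\xi, \eta\rangle$ and expanding via $(A\wedge B)C = \langle B,C\rangle A - \langle A,C\rangle B$ produces six terms, each carrying at least one of the factors $\langle X,\xi\rangle$, $\langle X,\eta\rangle$, $\langle Y,\xi\rangle$ or $\langle Y,\eta\rangle$, all zero by tangent-normal orthogonality.

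For the PMC corollary, $\nc{~}H = 0$ implies $R^\perp(X,Y)H = 0$ identically, so setting $\xi = H$ in the identity just proved yields $\langle [A_H, A_\eta]X, Y\rangle = 0$ for all tangent $X,Y$ and normal $\eta$, giving $[A_H, A_\eta] = 0$ since this commutator is a self-adjoint tangent endomorphism. The only potential pitfall is insisting on using the original Kähler-style formula \eqref{eq:Rtilde}, which works but then requires invoking relations such as $\langle (d\pi_1)X,\xi\rangle = -\langle (d\pi_2)X,\xi\rangle$ coming from $\langle X,\xi\rangle = 0$; the mixed form above dispatches the cancellation immediately, so this is really the only subtle choice in the proof.
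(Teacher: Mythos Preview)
The paper does not actually prove this lemma; it merely states it with a citation to \cite[Section 2.3]{ThesisKowalczyk}. Your argument is correct and is the standard one: derive the general Ricci equation, then kill the ambient term using the explicit form of $\tilde R$. Your observation that the expression $\tilde R(X,Y) = X\wedge Y - (d\pi_2)X\wedge Y - X\wedge (d\pi_2)Y$ makes the vanishing of $\langle \tilde R(X,Y)\xi,\eta\rangle$ immediate is exactly right, since every one of the six resulting terms carries a raw factor $\langle X,\xi\rangle$, $\langle X,\eta\rangle$, $\langle Y,\xi\rangle$ or $\langle Y,\eta\rangle$.

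One small slip: in the last sentence you call $[A_H,A_\eta]$ self-adjoint, but the commutator of two self-adjoint operators is skew-adjoint. This does not damage the argument, since $\langle [A_H,A_\eta]X,Y\rangle = 0$ for all tangent $X,Y$ already forces $[A_H,A_\eta]=0$ by non-degeneracy of the metric; just adjust the wording.
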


\subsection{Conformal coordinates and a Hopf differential}\label{subsec:complexcoord} Conformal coordinates $(x,y)$ on a Riemannian surface $\Sigma$ are coordinates such that the metric takes the form $e^{2u}(dx^2 + dy^2)$ for some smooth function $u$ on $\Sigma$. It is a classical result that such coordinates locally always exist on smooth Riemannian surfaces. If $\Sigma$ is oriented, we can moreover impose that $(\dx,\dy)$ is positively oriented and introduce a local complex parameter $z = x+iy$. Since the transition map between two such parameters is holomorphic, this gives $\Si$ the structure of a Riemann surface. Using the complex vectors $\dz = (\dx-i\dy)/2$ and $\dzbar = (\dx+i\dy)/2$, the geometry of $\Sigma$ can be described as follows, see for example \cite[Lemma 4.1]{PMC}.

\begin{lemma}\label{MTVdVLemma}
In the setting described above, the complex linear extension of the metric on $\Si$ is given by
\begin{equation} 
\ip{\dz}{\dzbar} = \frac{1}{2}e^{2u} \text{ and } \ip{\dz}{\dz} = \ip{\dzbar}{\dzbar} = 0.
\end{equation}
The Levi-Civita connection of $\Si$ is determined by
\begin{equation} \label{eq:nabladz}
\nabla_{\dz}\dz =  2 u_z \dz, \ \nabla_{\dzbar}\dzbar =  2 u_{\bar z} \dzbar \text{ and } \con{\dz}\dzbar = \con{\dzbar}\dz = 0. \end{equation}
If $\Phi\colon\Si \to M$ is an immersion of $\Si$ into an arbitrary Riemannian manifold $M$, then the second fundamental form $h$ satisfies
\begin{equation} \label{eq:hdzdzbar}
h(\dz,\dzbar) = \frac 12 e^{2u}H,
\end{equation}
where $H$ is the mean curvature vector field.
\end{lemma}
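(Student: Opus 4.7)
The plan is to prove the three assertions in turn, each by a direct calculation that reduces to the real coordinates $(x,y)$ and exploits the complex-bilinear extensions of the metric, the Levi-Civita connection, and the second fundamental form.

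For the metric identities, I would substitute $\dz = (\dx - i\dy)/2$ and $\dzbar = (\dx + i\dy)/2$ into the $\mathbb{C}$-bilinear extension of $\ip{\,\cdot\,}{\,\cdot\,}$ and apply $\ip{\dx}{\dx} = \ip{\dy}{\dy} = e^{2u}$ with $\ip{\dx}{\dy} = 0$. The diagonal contributions cancel in $\ip{\dz}{\dz}$ and $\ip{\dzbar}{\dzbar}$, while they add in $\ip{\dz}{\dzbar}$, yielding the claimed values.

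For the connection formulas, the key observation is that $\dz$ and $\dzbar$ are $\mathbb{C}$-linear combinations of the commuting coordinate fields $\dx,\dy$, so all Lie brackets among them vanish, and the $\mathbb{C}$-linearly extended Koszul formula reduces to
\[
2\ip{\con{X}Y}{Z} = X\ip{Y}{Z} + Y\ip{X}{Z} - Z\ip{X}{Y}.
\]
Writing $\con{\dz}\dz = a\dz + b\dzbar$ and taking inner products with $\dz$ and $\dzbar$, the only non-vanishing contribution comes from $\dz\ip{\dz}{\dzbar} = u_z e^{2u}$, which gives $a = 2u_z$ and $b = 0$. The analogous computation produces $\con{\dzbar}\dzbar = 2u_{\bar z}\dzbar$. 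For $\con{\dz}\dzbar$, every term in the Koszul expression is a derivative of either $\ip{\dz}{\dz} = 0$ or $\ip{\dzbar}{\dzbar} = 0$, so it vanishes; torsion-freeness together with $[\dz,\dzbar] = 0$ then forces $\con{\dzbar}\dz = 0$ as well.

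For the second fundamental form, I would use the orthonormal frame $\{e^{-u}\dx, e^{-u}\dy\}$ to rewrite the defining identity $H = \tfrac12 \operatorname{tr}(h)$ in the form $h(\dx,\dx) + h(\dy,\dy) = 2e^{2u}H$. Expanding $h(\dz,\dzbar)$ $\mathbb{C}$-bilinearly and using the symmetry of $h$ to cancel the mixed terms $\pm i\,h(\dx,\dy)$, one then obtains $h(\dz,\dzbar) = \tfrac14\bigl(h(\dx,\dx) + h(\dy,\dy)\bigr) = \tfrac12 e^{2u}H$. Since all three parts are essentially linear algebra once the conformal form of the metric and the $\mathbb{C}$-bilinear extensions are in place, no genuine obstacle arises; the only care needed is the bookkeeping of the factors $\tfrac12$ coming with the definitions of $\dz$ and $\dzbar$.
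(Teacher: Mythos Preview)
Your argument is correct and is the standard direct computation. Note, however, that the paper does not actually supply its own proof of this lemma: it states the result and refers the reader to \cite[Lemma~4.1]{PMC}. What you have written is exactly the elementary verification one expects behind that citation---expanding the $\mathbb{C}$-bilinear extensions of the metric, the Koszul formula, and the second fundamental form in terms of $\dx,\dy$---so there is nothing to contrast.
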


Now assume that $\Phi: \Sigma \to M$ is a PMC immersion of a surface with local conformal coordinates into a product of two real space forms. Using these conformal coordinates, one can define a holomorphic quadratic differential, see \cite{ThesisKowalczyk} and \cite{LiraVitorio}. Interpreting the expressions for this differential for $M=\sh$, we find the following.

\begin{lemma}\label{Hopf}
Let $\Phi\colon\Si\to \sh$ be a PMC immersion of a surface $\Si$ into $\sh$. Then there is a well-defined holomorphic quadratic differential $\Theta$ such that with respect to any local conformal coordinate $z$, $\Theta$ can be expressed as
\begin{equation} \label{eq:Hopf}
\Theta = \left(4\ip{h(\dz,\dz)}{H} + \ip{F\dz}{\dz}\right) dz\otimes dz.
\end{equation}
\end{lemma}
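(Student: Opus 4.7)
The plan is to verify two things: that the expression defines a globally well-defined quadratic differential on $\Sigma$, and that its coefficient is holomorphic, i.e., annihilated by $\dzbar$.

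For well-definedness, I would take another conformal parameter $w$ with $w = w(z)$ holomorphic and check the transformation law. Since $\dz = w'(z)\partial_w$ and both $h$ and $F$ are tensorial, the coefficient $4\ip{h(\dz,\dz)}{H} + \ip{F\dz}{\dz}$ picks up a factor $(w'(z))^2$, which is exactly cancelled by $dz\otimes dz = (w'(z))^{-2}\, dw\otimes dw$. So $\Theta$ patches together into a global section of $K_\Sigma \otimes K_\Sigma$.

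For holomorphicity, I would differentiate the coefficient with respect to $\dzbar$ and show the two contributions cancel. First, since $H$ and $h(\dz,\dz)$ are normal and $\nc{\dzbar} H = 0$ by the PMC hypothesis, the Weingarten formula gives
\[
\dzbar\ip{h(\dz,\dz)}{H} = \ip{\nc{\dzbar} h(\dz,\dz)}{H}.
\]
Because $\con{\dzbar}\dz = 0$ by \eqref{eq:nabladz}, $\nc{\dzbar} h(\dz,\dz)$ equals $(\conMix h)(\dzbar,\dz,\dz)$. The Codazzi equation \eqref{eq:codazzi} lets me swap the first two slots:
\[
(\conMix h)(\dzbar,\dz,\dz) = (\conMix h)(\dz,\dzbar,\dz) + \tfrac{1}{2}\bigl(F(\dzbar\wedge\dz)\dz\bigr)^\bot.
\]
Next I would show the first term on the right vanishes: using $h(\dz,\dzbar) = \frac{1}{2}e^{2u}H$ from \eqref{eq:hdzdzbar}, together with $\con{\dz}\dzbar=0$, $\con{\dz}\dz = 2u_z\dz$ and $\nc{\dz} H = 0$, the two $u_z$ terms cancel. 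Expanding the wedge via $(A\wedge B)C = \ip{B}{C}A - \ip{A}{C}B$ gives $(\dzbar\wedge\dz)\dz = -\frac{1}{2}e^{2u}\dz$, so
\[
4\,\dzbar\ip{h(\dz,\dz)}{H} = -\,e^{2u}\ip{F\dz}{H}.
\]

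Second, I compute $\dzbar\ip{F\dz}{\dz}$. Since $F$ is parallel with respect to $\tilde\nabla$ and symmetric,
\[
\dzbar\ip{F\dz}{\dz} = 2\ip{F\,\tilde\nabla_{\dzbar}\dz}{\dz},
\]
and Gauss's formula together with \eqref{eq:nabladz} and \eqref{eq:hdzdzbar} yield $\tilde\nabla_{\dzbar}\dz = \frac{1}{2}e^{2u}H$, whence $\dzbar\ip{F\dz}{\dz} = e^{2u}\ip{F\dz}{H}$. Adding the two contributions, the sum is zero, proving holomorphicity.

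The main obstacle is the careful bookkeeping in step one: ensuring that $(\conMix h)(\dz,\dzbar,\dz)$ really vanishes requires simultaneously using PMC, the specific form of $h(\dz,\dzbar)$, and the Christoffel symbols of a conformal metric. Once this vanishing is established, the rest is a direct calculation driven by $\tilde\nabla F = 0$ and the Codazzi right-hand side in \eqref{eq:codazzi}.
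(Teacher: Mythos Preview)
Your proof is correct and follows exactly the route the paper sketches: well-definedness via the tensorial transformation law, and holomorphicity by showing the $\dzbar$-derivative of the coefficient vanishes using Lemma~\ref{MTVdVLemma}, the Codazzi equation~\eqref{eq:codazzi}, and the parallelism of $F$. Your computation is simply a fully worked-out version of what the paper leaves to the reader.
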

For reasons explained in the introduction of this paper, we refer to $\Theta$ as a \emph{Hopf differential}. It is well-defined, since $dz$ and $\dz$ transform inversely under a change of conformal coordinates. To check that it is indeed holomorphic, it suffices to verify that its $\dzbar$-derivative vanishes, using Lemma \ref{MTVdVLemma}.

\subsection{K\"ahler functions}
The later sections will use similar techniques as in \cite{TorralboUrbano}. Therefore, this subsection concerns the necessary preliminaries on K\"ahler functions and the associated Frenet system.

Suppose that $\Phi\colon\Sigma\to \sh$ is a PMC surface. Denote the standard complex structures on $\bb{S}^2$ and $\bb{H}^2$ by $J_{\mathbb{S}^2}$ and $J_{\mathbb{H}^2}$ and consider the following complex structures on $\sh$:
\begin{equation} J_1 = (J_{\mathbb{S}^2},J_{\mathbb{H}^2}), \qquad J_2 = (J_{\mathbb{S}^2},-J_{\mathbb{H}^2}). \end{equation}
Since $J_{\mathbb{S}^2}$ and $J_{\mathbb{H}^2}$ are parallel, so are $J_1$ and $J_2$. Thus, these are K\"ahler. Let $\omega_j$ denote the corresponding K\"ahler 2-forms, that is, $\omega_j(X,Y) = \ip{J_j X}{Y}$, and let $\omega_{\Sigma}$ denote the area 2-form on $\Sigma$. Then the K\"ahler functions $C_1,C_2$ on $\Sigma$ are defined by
\begin{equation}\label{eq:defKahler} \Phi^*\omega_j = C_j \omega_{\Sigma}, \quad j\in \lbrace 1,2\rbrace.\end{equation}
Let $z=x+iy$ be a complex coordinate on $\Sigma$ with conformal factor $e^{2u}$ as in subsectionH \ref{subsec:complexcoord}. Denoting $\eta = H/\|H\|$ and using the standard orientation on $\sh$, there exists a unique normal vector field $\tilde \eta$ such that $( e^{-u}\partial_x, e^{-u}\partial_y, \tilde \eta, \eta )$ is a positively oriented orthonormal frame for $T (\sh)$. Define the complexified normal vector field
\begin{equation}\label{eq:defxi} \xi = \frac{1}{\sqrt{2}}(\eta-i\tilde{\eta}).\end{equation}

As in \cite{TorralboUrbano}, a Frenet system can be obtained for PMC surfaces of $\sh$. Using the standard model for $\bb{S}^2$ and the hyperboloid model for $\bb{H}^2$, $\sh$ can be seen as a subspace of $\R^3\times \R^3_1 \cong \R^6_1.$ The following proposition summarizes the fundamental data and the Frenet system arising from this immersion into $\R^6_1$. There are minor differences with \cite[Section 3]{TorralboUrbano}.

\begin{proposition}\label{TUsummary}
	Let $\varphi \colon \Sigma \to \sh$ be a PMC surface of $\sh \subseteq \R^6_1$. Denote the immersion of $\sh$ into $\R^6_1$ by $\iota\colon \sh \to \R^6_1$ and define $\Phi = (\phi,\psi) = \iota \circ \varphi \colon \Sigma \to \R^6_1$. Let $z$ be a conformal coordinate on a neighborhood $U \subseteq \Sigma$ and let $\xi$ denote the complexified normal vector field introduced in equation \eqref{eq:defxi}. Let $C_1$ and $C_2$ be the K\"ahler functions defined in \eqref{eq:defKahler}. Then $\Phi$ and $\hat{\Phi} = (\phi,-\psi)$ form a reference frame for the normal space of $\sh$ with
	\begin{equation}\label{eq:Phiproduct} \ip{\Phi}{\Phi} = \langle\hat{\Phi},\hat{\Phi}\rangle = 0, \qquad \langle\Phi,\hat{\Phi}\rangle = 2.\end{equation}
	There exist functions $\gamma_1,\gamma_2,f_1,f_2\colon U \to \C$ such that
	\begin{equation}\label{eq:gammanorm}
		|\gamma_1|^2 = \frac{e^{2u}}{2} (1-C_1^2), \quad |\gamma_2|^2 = \frac{e^{2u}}{2} (1-C_2^2)
	\end{equation}
	and such that the frame $(\partial_z, \partial_{\zbar}, \xi, \bar{\xi},\Phi,\hat{\Phi} )$ satisfies
	\begin{equation}\label{eq:frenet}
		\begin{cases}
			D_{\dz}\dz = 2\dz u\dz + f_1\xi + f_2 \overline{\xi} - \frac{1}{2}\gamma_1\gamma_2 \Phi, \\
			D_{\dzbar} \dz = \frac{\norm{H}e^{2u}}{2\sqrt{2}}(\xi + \overline{\xi}) - \frac{e^{2u}}{4}C_1C_2 \Phi - \frac{e^{2u}}{4} \hat{\Phi}, \\
			D_{\dz} \xi = -\frac{\norm{H}}{\sqrt{2}} \dz - 2e^{-2u} f_2 \dzbar + \frac{i}{2} C_1 \gamma_2 \Phi, \\
			D_{\dz} \overline{\xi} = -\frac{\norm{H}}{\sqrt{2}} \dz - 2e^{-2u} f_1 \dzbar + \frac{i}{2} C_2 \gamma_1 \Phi,
		\end{cases}
	\end{equation}
	where $D$ denotes the Levi-Civita connection of $\R^6_1$. The functions $C_j,\gamma_j,$ and $f_j$ for $j \in \lbrace 1,2\rbrace$ satisfy the differential equations
	\begin{equation} \label{eq:DCj&Dgammaj}
		\dz C_j = 2ie^{-2u}\overline{\gamma_j}f_j - i \frac{\norm{H}}{\sqrt{2}} \gamma_j, \quad \dzbar\gamma_j = -\frac{i \norm{H}e^{2u}C_j}{\sqrt{2}}, \quad \dz\gamma_j = 2\dz u\gamma_j  - 2iC_jf_j,
	\end{equation}
	\begin{equation} \label{eq:Dfj}
		\dzbar f_1 = i \frac{e^{2u}}{4} C_2 \gamma_1 \qquad \text{and} \qquad \dzbar f_2 = i \frac{e^{2u}}{4} C_1 \gamma_2.
	\end{equation}
\end{proposition}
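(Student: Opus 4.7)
The argument proceeds by composing the PMC immersion $\varphi$ with the standard isometric inclusion $\iota\colon\sh\hookrightarrow \R^6_1$, where $\bb{S}^2\subset\R^3$ is the round sphere and $\bb{H}^2$ is the upper sheet of $\{x\in\R^3_1:\langle x,x\rangle_1=-1\}$. The position vectors furnish a natural normal frame: at $(\phi,\psi)$ the vectors $(\phi,0)$ and $(0,\psi)$ are orthogonal in $\R^6_1$ with squared norms $+1$ and $-1$, so their sum $\Phi=(\phi,\psi)$ and difference $\hat\Phi=(\phi,-\psi)$ are both null and satisfy $\langle\Phi,\hat\Phi\rangle=2$, which is \eqref{eq:Phiproduct}. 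Using $D_X\phi=X_1$ and $D_X\psi=X_2$ for $X=(X_1,X_2)$ tangent to $\sh$, the Gauss formula for $\iota$ together with a rewriting via the product structure $F$ yields the compact expression
\[ h_\iota(X,Y) = -\tfrac12\langle FX,Y\rangle\,\Phi - \tfrac12\langle X,Y\rangle\,\hat\Phi, \]
which is the bridge between intrinsic and ambient data.

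Each equation in \eqref{eq:frenet} is then assembled from the three-stage decomposition $D_VW = \nabla_VW + h(V,W) + h_\iota(V,W)$. The intrinsic part on $\Sigma$ is supplied by Lemma \ref{MTVdVLemma}; the normal-to-$\Sigma$ parts use Lemma \ref{MTVdVLemma} for $h(\dz,\dzbar)=\tfrac12 e^{2u}H = \|H\|e^{2u}(\xi+\bar\xi)/(2\sqrt2)$, and introduce $f_1,f_2$ as the coefficients of $\xi,\bar\xi$ in $h(\dz,\dz)$. For $D_{\dz}\xi,\, D_{\dz}\bar\xi$, the tangential parts are the shape operators; a short computation using $\langle A_\xi\dz,\dzbar\rangle = \|H\|e^{2u}/(2\sqrt2)$ and $\langle A_\xi\dz,\dz\rangle = f_2$ produces the coefficients $-\|H\|/\sqrt2$ and $-2e^{-2u}f_2$ in the first formula, and symmetrically for the second. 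The PMC hypothesis, together with the Ricci equation (Lemma \ref{Ricci}), guarantees that one may work in a frame where $\nabla^\perp\eta=0$ and $[A_\eta,A_{\tilde\eta}]=0$, which eliminates the normal-connection terms. The $\Phi,\hat\Phi$ contributions in every line are read off directly from the formula for $h_\iota$.

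The coefficients $\gamma_j$ are defined through the ambient complex structures. Since $J_j$ is parallel and orthogonal with $\langle\dz,\dz\rangle=0$, the expansion $J_j\dz = iC_j\dz + \alpha_j\xi + \beta_j\bar\xi$ forces $\alpha_j\beta_j=0$; fixing the orientation of $\tilde\eta$ relative to $\eta$ selects exactly one non-zero coefficient, which we call $\gamma_j$, normalized so that $\gamma_1\gamma_2 = \langle F\dz,\dz\rangle$ via the identity $F=-J_1J_2$. The norm formulas \eqref{eq:gammanorm} then drop out of $\langle J_j\dz,J_j\dzbar\rangle = e^{2u}/2$ combined with the defining relation $\langle J_j\dz,\dzbar\rangle = iC_je^{2u}/2$. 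The three differential equations in \eqref{eq:DCj&Dgammaj} follow by differentiating these defining identities along $\dz$ and $\dzbar$ and substituting the Frenet expressions, whereas \eqref{eq:Dfj} is the Codazzi equation of Lemma \ref{Codazzi} evaluated on $(\dz,\dzbar,\dz)$: its right-hand side $\tfrac12\bigl(F(\dz\wedge\dzbar)\dz\bigr)^\perp$ produces $iC_k\gamma_je^{2u}/4$ once projected onto $\xi,\bar\xi$ via the same normal-frame analysis.

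The main obstacle is the bookkeeping of signs and orientations rather than any single hard step. In particular, the Lorentzian signature of $\R^3_1$ is what forces the convenient normal frame to consist of the null vectors $\Phi,\hat\Phi$ rather than $(\phi,0),(0,\psi)$, and it flips one of the signs in $h_\iota$ relative to the two-sphere case treated in \cite{TorralboUrbano}; the choice of which of $\alpha_j,\beta_j$ vanishes has to be made once and then respected throughout, since any inconsistency propagates through all four Frenet lines. Once these conventions are locked in, the proof is a systematic calculation modeled on \cite[Section 3]{TorralboUrbano}, with the signature flip producing the only genuinely new contributions.
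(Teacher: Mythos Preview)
Your proposal is correct and follows essentially the same approach as the paper, which simply refers the reader to \cite[Section~3]{TorralboUrbano} and notes that the only differences arise from the Lorentzian inner products $\langle\Phi,\Phi\rangle=\langle\hat\Phi,\hat\Phi\rangle=0$, $\langle\Phi,\hat\Phi\rangle=2$ in place of $\|\Phi\|^2=\|\hat\Phi\|^2=\pm2$, $\langle\Phi,\hat\Phi\rangle=0$. Your outline supplies considerably more detail than the paper itself---in particular the compact formula $h_\iota(X,Y)=-\tfrac12\langle FX,Y\rangle\,\Phi-\tfrac12\langle X,Y\rangle\,\hat\Phi$ is exactly the bridge that makes the signature change transparent; one small imprecision is that the vanishing of the normal-connection terms comes directly from $\nabla^\perp H=0$ (hence $\nabla^\perp\eta=\nabla^\perp\tilde\eta=0$ in a rank-two normal bundle), not from the Ricci equation.
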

\begin{proof}
		The methods to obtain these formulas are the same as in Section 3 of \cite{TorralboUrbano}. The equations of the Frenet system \eqref{eq:frenet} are different in the $\Phi$- and $\hat{\Phi}$-components because of the inner products \eqref{eq:Phiproduct}. Namely, in \cite{TorralboUrbano}, $\norm{\Phi}^2 = \|\hat{\Phi}\|^2 = \pm 2$ and $\ip{\Phi}{\hat{\Phi}} = 0$, whereas, in the present paper, $\norm{\Phi}^2 = \|\hat{\Phi}\|^2 = 0$ and $\ip{\Phi}{\hat{\Phi}} = 2$. This change also leads to different expressions for $\dzbar f_1$ and $\dzbar f_2$ in \eqref{eq:Dfj}. 
 \end{proof}
\begin{remark}
	The Hopf differential \eqref{eq:Hopf} can be written in terms of the functions in Proposition \ref{TUsummary} as
	\begin{equation}\label{eq:HopfAlternative} \Theta = (2\sqrt{2} \norm{H} (f_1+f_2) + \gamma_1 \gamma_2) dz\otimes dz. \end{equation}
	The first term of the differential \eqref{eq:Hopf}, $4\ip{h(\dz,\dz)}{H}$, can be retrieved from the first equation of the Frenet system \eqref{eq:frenet}. By the definition of $\xi$ \eqref{eq:defxi}, this term equals $4\ip{h(\dz,\dz)}{H} = 2\sqrt{2}\norm{H}(f_1+f_2).$ To rewrite $\ip{F\dz}{\dz}$, note that $F = -J_1J_2$. From equations (3.2) and (3.3) in \cite{TorralboUrbano}, one obtains $\ip{F\dz}{\dz} = \ip{J_1\dz}{J_2\dz} = \gamma_1\gamma_2.$
\end{remark}

\section{PMC surfaces in $\sh$ with vanishing Hopf differential}\label{trivialDifferential}

The goal of this section is to prove Theorem \ref{nonTrivialIntersection}, which states that if the Hopf differential \eqref{eq:Hopf} vanishes identically on a PMC surface in $\sh$, then every tangent space to that surface contains a non-zero vector that is either tangent to $\mathbb S^2$ or to $\mathbb H^2$. We will need several lemmas to achieve this.

\begin{lemma}\label{ShapeOperatorH}
Let $\Phi: \Sigma \to \sh$ be a PMC surface whose  Hopf differential \eqref{eq:Hopf} vanishes. Let $f$ be the symmetric $(1,1)$-tensor field
\begin{equation}\label{eq:deff}
f: T\Sigma \to T\Sigma\colon v \mapsto (Fv)^\top,
\end{equation}
where the superscript $\top$ denotes the component tangent to $\Sigma$, and let $\mathrm{Adj}(f)$ be its adjugate. Then the shape operator associated to the mean curvature vector field $H$ is
\begin{equation}\label{eq:AH} 
A_H = \norm{H}^2 \mathrm{id} - \frac{1}{8}f + \frac{1}{8}\mathrm{Adj}(f).
\end{equation}
\end{lemma}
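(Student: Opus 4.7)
The plan is to pin down $A_H$ through its two characteristic pairings $\langle A_H\partial_z,\partial_{\bar z}\rangle$ and $\langle A_H\partial_z,\partial_z\rangle$ in a local conformal coordinate $z$, and then to verify that the operator on the right-hand side of \eqref{eq:AH} produces the same two pairings. This is enough because a symmetric $(1,1)$-tensor $T$ on the oriented surface $\Sigma$ is uniquely determined by these values: in the orthonormal frame $(e^{-u}\partial_x,e^{-u}\partial_y)$ the first recovers $\tfrac{e^{2u}}{4}\mathrm{tr}(T)$, while the real and imaginary parts of the second recover the two independent entries of the traceless part of $T$.

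For $A_H$ itself, \eqref{eq:relationAh} together with \eqref{eq:hdzdzbar} give
$$\langle A_H\partial_z,\partial_{\bar z}\rangle = \langle h(\partial_z,\partial_{\bar z}),H\rangle = \tfrac{1}{2}e^{2u}\|H\|^2.$$
Vanishing of the Hopf differential \eqref{eq:Hopf} reads $4\langle h(\partial_z,\partial_z),H\rangle + \langle F\partial_z,\partial_z\rangle = 0$. Since $F$ is symmetric and $\partial_z$ is tangent to $\Sigma$, the normal component of $F\partial_z$ does not contribute to the pairing with $\partial_z$, so $\langle F\partial_z,\partial_z\rangle = \langle f\partial_z,\partial_z\rangle$, and combining with \eqref{eq:relationAh} yields
$$\langle A_H\partial_z,\partial_z\rangle = -\tfrac{1}{4}\langle f\partial_z,\partial_z\rangle.$$

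To verify that the RHS of \eqref{eq:AH} reproduces these two pairings, I use the two-dimensional Cayley--Hamilton identity $\mathrm{Adj}(f) = \mathrm{tr}(f)\,\mathrm{id} - f$, which rewrites the candidate operator as $\|H\|^2\,\mathrm{id} - \tfrac{1}{4}f + \tfrac{1}{8}\mathrm{tr}(f)\,\mathrm{id}$. Since $\langle\partial_z,\partial_z\rangle = 0$, its pairing with $(\partial_z,\partial_z)$ equals $-\tfrac{1}{4}\langle f\partial_z,\partial_z\rangle$, matching $A_H$. Its pairing with $(\partial_z,\partial_{\bar z})$ equals $\tfrac{1}{2}e^{2u}\|H\|^2$, because the contributions of $-\tfrac{1}{4}f$ and $\tfrac{1}{8}\mathrm{tr}(f)\,\mathrm{id}$ both produce $\pm\tfrac{e^{2u}}{16}\mathrm{tr}(f)$ and cancel. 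The argument is in essence bookkeeping of coefficients; the only conceptual step is the observation that in dimension two the single complex scalar $\langle A_H\partial_z,\partial_z\rangle$ already captures the full traceless part of $A_H$, so the vanishing of the Hopf differential, combined with the standard trace identity for mean curvature, forces exactly the stated formula for $A_H$.
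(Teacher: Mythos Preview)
Your proof is correct and follows essentially the same approach as the paper: both determine $A_H$ from the vanishing Hopf differential (giving the traceless part) together with the trace identity $h(\partial_z,\partial_{\bar z})=\tfrac{1}{2}e^{2u}H$, working in a local conformal coordinate. The only difference is cosmetic: the paper unpacks to the real frame $(\partial_x,\partial_y)$, writes out the matrix of $f$ explicitly, and solves a $3\times 3$ linear system before recognizing the adjugate, whereas you stay in the complex frame and invoke the two-dimensional Cayley--Hamilton identity $\mathrm{Adj}(f)=\mathrm{tr}(f)\,\mathrm{id}-f$ to shortcut the matrix bookkeeping.
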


\begin{proof}
Since both sides of the equation are symmetric tensors, it suffices to show that the tensors yield the same image for $ \dx$ and $\dy$, where $z = x+iy$ is a conformal parameter. When the Hopf differential \eqref{eq:Hopf} vanishes the equation $4\ip{h(\dz,\dz)}{H} = -\ip{F\dz}{\dz} = -\ip{f\dz}{\dz}$ holds. Denote the matrix of $f$ with respect to the basis $( \dx, \dy )$ as
	$$f = \begin{pmatrix}a&b\\b&c\end{pmatrix},$$
	where $a,b,c$ are smooth functions.
	Then the two terms of the differential \eqref{eq:Hopf} can be expressed as
		$-\ip{f\dz}{\dz} = -\frac{e^{2u}}{4}(a-2ib-c)$ and
		$4\ip{h(\dz,\dz)}{H} = \ip{h(\dx,\dx) - h(\dy,\dy)}{H} - 2i\ip{h(\dx,\dy)}{H}$.
	Thus, the equation $4\ip{h(\dz,\dz)}{H} = -\ip{f\dz}{\dz}$ can be split into its real and imaginary part to obtain two real-valued equations. From the definition of the mean curvature vector, there is an additional equation
	$\ip{h(\dx,\dx)+h(\dy,\dy)}{H} = 2e^{2u}\ip{H}{H} = 2e^{2u}\norm{H}^2.$
	This yields a system of three linear equations for $\ip{h(\dx,\dx)}{H}, \ip{h(\dx,\dy)}{H}$ and $\ip{h(\dy,\dy)}{H}$. The unique solution is
	$$\ip{h(\dx,\dx)}{H} = e^{2u}\norm{H}^2-\frac{e^{2u}(a-c)}{8}, \qquad \ip{h(\dy,\dy)}{H} = e^{2u}\norm{H}^2+\frac{e^{2u}(a-c)}{8},$$
	$$\ip{h(\dx,\dy)}{H} = -b\frac{e^{2u}}{4}. $$
	The metric satisfies $\ip{\dx}{\dx} = \ip{\dy}{\dy} = e^{2u}, \ip{\dx}{\dy} = 0$, so by equation \eqref{eq:relationAh} the matrix of $A_H$ with respect to $(\dx,\dy)$ is
	$$A_H = \begin{pmatrix}
		\norm{H}^2-\frac{a}{8} + \frac{c}{8} & -\frac{b}{4} \\ -\frac{b}{4} & \norm{H}^2+\frac{a}{8}  -\frac{c}{8} 
	\end{pmatrix}= \norm{H}^2\text{Id} - \frac{1}{8}f + \frac{1}{8}\mathrm{Adj}(f).$$
\end{proof}

It follows from \eqref{eq:AH} that a local  frame on $\Sigma$ diagonalizing $f$ also diagonalizes $A_H$. Moreover, by Lemma~\ref{Ricci}, $A_H$ commutes with all shape operators. Thus, a frame diagonalizing $f$ diagonalizes all shape operators, unless in a pseudo-umbilical point, i.e., a point where $A_H$ is a multiple of the identity. The next lemma implies that the pseudo-umbilical points are scarce: they form a closed subset of the surface with empty interior.
\begin{lemma}\label{noUmbilics}
Let $\Phi: \Sigma \to \sh$ be a PMC surface whose  Hopf differential \eqref{eq:Hopf} vanishes. Denote by $\Sigma_0 \subseteq \Sigma$ the set of points where $\Phi$ is not pseudo-umbilical. Then $\Sigma_0$ is an open and dense subset of $\Sigma$.
\end{lemma}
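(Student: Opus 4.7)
Openness of $\Sigma_0$ is immediate: pseudo-umbilicity is the closed condition $A_H - \norm{H}^2 \mathrm{id} = 0$ on a smooth tensor field, so its complement in $\Sigma$ is open. For density I argue by contradiction: suppose $\Sigma_0$ is not dense, so that there exists a nonempty open set $U \subseteq \Si$ on which $\Phi$ is everywhere pseudo-umbilical. Since $\Si$ is two-dimensional, every symmetric $(1,1)$-tensor $f$ satisfies $\mathrm{Adj}(f) = \mathrm{tr}(f)\,\mathrm{id} - f$; combining this with Lemma \ref{ShapeOperatorH}, the identity $A_H = \norm{H}^2 \mathrm{id}$ on $U$ becomes $\mathrm{Adj}(f) = f$, equivalently $f = \frac{1}{2}\mathrm{tr}(f)\,\mathrm{id}$, so $f$ is a pointwise scalar multiple of the identity on $U$.

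Fix a local conformal coordinate $z$ on $U$. Since $\dz$ is tangent to $\Si$, one has $\ip{f\dz}{\dz} = \ip{F\dz}{\dz}$. The left-hand side vanishes because $f$ is a multiple of $\mathrm{id}$ and $\ip{\dz}{\dz} = 0$ by Lemma \ref{MTVdVLemma}, while the right-hand side equals $\gamma_1 \gamma_2$ by the remark following Proposition \ref{TUsummary}. Hence $\gamma_1 \gamma_2 \equiv 0$ on $U$. A standard argument (if $\lbrace \gamma_1 = 0\rbrace$ has empty interior in $U$, its complement is dense in $U$ and contained in the closed set $\lbrace\gamma_2 = 0\rbrace$, which therefore equals $U$) produces a nonempty open subset $V \subseteq U$ on which, without loss of generality, $\gamma_1 \equiv 0$.

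On $V$ the derivative $\dzbar\gamma_1$ vanishes, so the second equation of \eqref{eq:DCj&Dgammaj} reduces to $-i\frac{\norm{H} e^{2u}}{\sqrt{2}} C_1 = 0$. Since $\norm{H} > 0$ by the definition of PMC, this forces $C_1 \equiv 0$ on $V$. But $\gamma_1 = 0$ together with \eqref{eq:gammanorm} gives $C_1^2 = 1$, a contradiction. The conceptual heart of the argument, and the step most likely to hide difficulty, is the translation $\ip{F\dz}{\dz} = \gamma_1 \gamma_2$ from the remark after Proposition \ref{TUsummary}: it converts the differential-geometric pseudo-umbilicity hypothesis into an algebraic constraint on the K\"ahler data, after which the first-order equations \eqref{eq:DCj&Dgammaj} together with the norm identities \eqref{eq:gammanorm} become manifestly incompatible.
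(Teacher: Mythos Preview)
Your argument is correct. Both approaches begin identically (pseudo-umbilicity on an open set forces $\ip{F\dz}{\dz}=0$, though you take a slight detour through Lemma~\ref{ShapeOperatorH} where the paper simply notes $\ip{h(\dz,\dz)}{H}=\ip{A_H\dz}{\dz}=\|H\|^2\ip{\dz}{\dz}=0$), but they diverge from that point. The paper stays with the product structure $F$: it differentiates $\ip{F\dz}{\dz}=0$ to obtain $\ip{FH}{\dz}=0$, deduces that $FX_i = \pm X_i$ for a tangent frame, and concludes that the surface locally sits in a slice $\mathbb{S}^2\times\{\mathrm{pt}\}$ or $\{\mathrm{pt}\}\times\mathbb{H}^2$, hence is minimal, contradicting PMC. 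You instead convert $\ip{F\dz}{\dz}=0$ into $\gamma_1\gamma_2=0$ via the remark after Proposition~\ref{TUsummary} and exploit the first-order system \eqref{eq:DCj&Dgammaj}: on the open set where (say) $\gamma_1\equiv 0$, the equation $\dzbar\gamma_1=-i\|H\|e^{2u}C_1/\sqrt{2}$ forces $C_1=0$, while \eqref{eq:gammanorm} forces $C_1^2=1$. Your route is shorter and purely algebraic once the K\"ahler--Frenet data is available; the paper's route is more self-contained and geometric, requiring only the product structure and not the complex frame of Proposition~\ref{TUsummary}.
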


\begin{proof}
First note that if $A_H$ is a multiple of the identity, it necessarily is $A_H=\|H\|^2\mathrm{id}$. Towards a contradiction, assume that there is an open subset $U \subseteq \Sigma$ where $A_H  = \norm{H}^2\mathrm{id}$. By restricting $U$ if necessary, one can assume that there is a conformal parameter $z = x+iy$ on~$U$. The first term of the Hopf differential \eqref{eq:Hopf} vanishes, as $\ip{h(\dz,\dz)}{H}  = \langle A_H\dz,\dz \rangle = \|H\|^2 \langle \dz,\dz \rangle = 0$. Since the differential vanishes, this implies $\ip{F\dz}{\dz} = 0$, which is equivalent to $\langle F\dx,\dx\rangle = \ip{F\dy}{\dy}$ and $\ip{F\dx}{\dy} = 0$. Moreover, from $\ip{F\dz}{\dz} = 0$, the fact that $F$ is parallel and symmetric and by using \eqref{eq:Gauss}, \eqref{eq:nabladz} and \eqref{eq:hdzdzbar}, one calculates that $0 = \dzbar \ip{F\dz}{\dz} = 2 \langle F\conM{\dzbar}\dz,\dz\rangle = e^{2u}\langle FH,\dz \rangle$. Therefore, $\ip{F\dx}{H} = \ip{F\dy}{H} = 0$. In summary,
$$ F\dx = a\dx + \eta_1, \qquad F\dy = a\dy + \eta_2, $$
where $a$ is a function and $\eta_1$ and $\eta_2$ are normal vector fields, orthogonal to $H$, which implies that they are linearly dependent. Since $0 = \ip{\dx}{\dy} = \ip{F\dx}{F\dy}$, at least one of the vector fields $\eta_1$ and $\eta_2$ vanishes. Because $F$ preserves the norm, it follows that $a = \pm 1$, and thus both $\eta_1$ and $\eta_2$ vanish on $U$. This means that, at every point $p$ of $U$, $T_p\Sigma$ is the eigenspace of $F_p$ with eigenvalue $1$ or $-1$. These eigenspaces are precisely $T_{\pi_1(p)}\bb{S}^2\times \lbrace 0 \rbrace$ and $\lbrace 0 \rbrace\times T_{\pi_2(p)}\bb{H}^2$. Therefore, the embedding of $U$ is part of a surface $\bb{S}^2 \times \lbrace \pi_2(p) \rbrace$ or $\lbrace \pi_1(p) \rbrace\times\bb{H}^2$. Thus, $U$ is totally geodesic in $\sh$ and therefore minimal, not PMC. This is a contradiction.
\end{proof}
One can now construct an orthonormal frame along a PMC surface in $\sh$ with vanishing Hopf differential, adapted to the product structure $F$. More precisely, the frame can be defined in a neighborhood of any point of the open and dense subset introduced in Lemma \ref{noUmbilics}.

\begin{lemma}\label{FrameLemma}
Let $\Phi: \Sigma \to \sh$ be a PMC surface whose Hopf differential \eqref{eq:Hopf} vanishes. Denote by $\Sigma_0 \subseteq \Sigma$ the open and dense subset of points where $\Phi$ is not pseudo-umbilical. Then, in a neighborhood of any point of $\Sigma_0$, there exists an orthonormal frame $(X_1,X_2,\xi_1,\xi_2)$ where $X_1,X_2$ are tangent to $\Sigma$ and $\xi_1,\xi_2$ are normal to $\Sigma$ such that the matrix of $F$ with respect to this frame is of the form
\begin{equation} \label{eq:matrixF}
F = 
\begin{pmatrix} \cos\alpha & 0 & \sin\alpha & 0 \\
0 & \cos\beta & 0 & \sin\beta \\
\sin\alpha & 0 & -\cos\alpha & 0 \\
0 & \sin\beta & 0 & -\cos\beta \end{pmatrix},
\end{equation}
where $\alpha$ and $\beta$ are functions on that neighborhood, such that $\cos\alpha\!-\!\cos\beta$ nowhere vanishes.
\end{lemma}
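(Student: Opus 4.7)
The plan is to diagonalise $f$ on the tangent bundle of $\Sigma_0$ by a smooth orthonormal frame $(X_1,X_2)$, extract from the normal projections of $FX_1$ and $FX_2$ a matched orthonormal normal frame $(\xi_1,\xi_2)$, and then exploit $F^2=\mathrm{id}$ together with the fact that $F$ is an isometry to verify the matrix \eqref{eq:matrixF}.

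First, I would observe that the two eigenvalues $\lambda_1,\lambda_2$ of $f$ are everywhere distinct on $\Sigma_0$. Indeed, in an eigenbasis of $f$, the adjugate is $\mathrm{Adj}(f)=\mathrm{diag}(\lambda_2,\lambda_1)$, so formula \eqref{eq:AH} gives
\begin{equation*}
A_H=\mathrm{diag}\!\left(\norm{H}^2+\tfrac{\lambda_2-\lambda_1}{8},\;\norm{H}^2+\tfrac{\lambda_1-\lambda_2}{8}\right),
\end{equation*}
which is a multiple of the identity precisely when $\lambda_1=\lambda_2$. By the definition of $\Sigma_0$ this does not happen. Since $f$ is symmetric with simple spectrum, it can be smoothly diagonalised by an orthonormal frame $(X_1,X_2)$ in a neighbourhood of any point of $\Sigma_0$. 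Because $F$ is a linear isometry, $|\lambda_i|=|\langle FX_i,X_i\rangle|\leq 1$, so I may choose smooth functions $\alpha,\beta$ with $\cos\alpha=\lambda_1$, $\cos\beta=\lambda_2$, and with $\sin\alpha,\sin\beta\geq 0$.

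Next, I would set $\eta_i := FX_i-\lambda_i X_i$, the normal component of $FX_i$. A direct computation using that $F$ is an isometry, that $f$ is diagonal in $(X_1,X_2)$, and that $\langle X_1,X_2\rangle=0$, yields
\begin{equation*}
|\eta_i|^2=1-\lambda_i^2\quad\text{and}\quad \langle\eta_1,\eta_2\rangle=\langle FX_1,FX_2\rangle-\lambda_1\lambda_2\langle X_1,X_2\rangle=0,
\end{equation*}
so defining $\xi_i:=\eta_i/|\eta_i|$ produces an orthonormal normal frame satisfying $FX_1=\cos\alpha\,X_1+\sin\alpha\,\xi_1$ and $FX_2=\cos\beta\,X_2+\sin\beta\,\xi_2$. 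Applying $F$ once more and using $F^2=\mathrm{id}$, from $F(FX_1)=X_1$ I obtain $F\xi_1=\sin\alpha\,X_1-\cos\alpha\,\xi_1$, and symmetrically $F\xi_2=\sin\beta\,X_2-\cos\beta\,\xi_2$. These four relations are exactly the matrix \eqref{eq:matrixF}. Finally, $\cos\alpha-\cos\beta=\lambda_1-\lambda_2$ nowhere vanishes on $\Sigma_0$ by the first step.

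The main obstacle is the smooth choice of $\xi_i$ at points where $|\lambda_i|=1$, since there $\eta_i$ vanishes and $\xi_i$ is no longer determined by the construction. At such a point $X_i$ lies in a $\pm 1$-eigenspace of $F$, i.e.\ tangent to $\bb{S}^2$ or $\bb{H}^2$; the frame can still be completed by selecting $\xi_i$ in the opposite eigenspace of $F$, matching the limit from neighbouring regular points. Since the lemma only asserts existence of a frame in a neighbourhood of a point of $\Sigma_0$, one may freely shrink to the open subset where $|\lambda_i|<1$ if needed, the essential conclusion $\cos\alpha\neq\cos\beta$ being uniform on all of $\Sigma_0$.
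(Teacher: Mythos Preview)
Your argument is essentially correct on the open set where both $|\lambda_i|<1$, and it is more constructive than the paper's: you build $\xi_i$ directly as the normalised normal part of $FX_i$ and then read off \eqref{eq:matrixF} from $F^2=\mathrm{id}$. The paper instead diagonalises the \emph{normal} counterpart $f^\perp\colon T^\perp\Sigma\to T^\perp\Sigma$, $v\mapsto (Fv)^\perp$, by an orthonormal frame $(\xi_1,\xi_2)$, writes $F$ in block form with respect to $(X_1,X_2,\xi_1,\xi_2)$, and then uses that $F$ is a symmetric orthogonal involution to force the off-diagonal block into one of two shapes; a final swap of $\xi_1,\xi_2$ selects \eqref{eq:matrixF}. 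Your route is shorter because it never needs the case analysis on the off-diagonal block.

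The genuine gap is exactly the one you flag in your last paragraph, and your proposed fix does not close it. The lemma asserts a frame \emph{around every} $p_0\in\Sigma_0$, so if $|\lambda_1(p_0)|=1$ you cannot ``shrink to the open subset where $|\lambda_i|<1$'', and the phrase ``matching the limit from neighbouring regular points'' is not justified (such a limit need not exist, or be unique, a priori). The paper's approach sidesteps this because the eigenvalues of $f^\perp$ are $-\lambda_1,-\lambda_2$ (from $\mathrm{tr}\,F=0$ and $F^2=\mathrm{id}$ one gets $\mathrm{tr}\,f^\perp=-\mathrm{tr}\,f$ and that $(f^\perp)^2$ and $f^2$ share spectrum), hence $f^\perp$ has \emph{simple} spectrum on all of $\Sigma_0$ and admits a smooth orthonormal eigenframe there---no degeneration occurs. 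You can repair your proof with minimal change: after choosing $(X_1,X_2)$, define $(\xi_1,\xi_2)$ as a smooth orthonormal eigenframe of $f^\perp$ (which exists everywhere on $\Sigma_0$ by the remark just made), and then observe that, where $\sin\alpha\neq 0$, your $\eta_i/|\eta_i|$ agrees with this $\xi_i$ up to sign; the relation $f^\perp\circ g=-g\circ f$ (where $g(v)=(Fv)^\perp$) shows $\eta_i=g(X_i)$ lands in the $(-\lambda_i)$-eigenspace of $f^\perp$, which pins down the pairing and recovers \eqref{eq:matrixF} globally on $\Sigma_0$.
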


\begin{proof}
Fix a point $p_0 \in \Sigma_0$. By \eqref{eq:AH}, $f$ is not a multiple of the identity in a neighborhood $U_1$ of $p_0$. Since $f$ is symmetric, there are orthonormal vector fields $X_1$ and $X_2$ on $U_1$ which are eigenvectors with different eigenvalues of $f$ in all points of $U_1$. Similarly, one can construct an orthonormal frame $(\xi_1,\xi_2)$ for the normal bundle of $\Sigma$, which, at every point of a neigbourhood $U_2$ of $p_0$, consists of two eigenvectors of the symmetric $(1,1)$-tensor field
$ f^{\perp}: T^{\perp}\Sigma \to T^{\perp}\Sigma: v \mapsto (Fv)^{\perp}, $
where the last superscript $\perp$ denotes the component normal to $\Sigma$.

Hence, with respect to the orthonormal frame $(X_1,X_2,\xi_1,\xi_2)$, defined on the neighbourhood $U=U_1 \cap U_2$ of $p_0$, the matrix of $F$ is
$$ F = 
\begin{pmatrix} a & 0 & b & c \\ 
0 & d & e & j \\ 
b & e & k & 0 \\ 
c & j & 0 & l \end{pmatrix}, \ \ a\neq d. $$
Since $F$ is orthogonal, one has
$$ F = 
\begin{pmatrix} \cos\alpha & 0 & \sin\alpha & 0 \\
0 & \cos\beta & 0 & \sin\beta \\ 
\sin\alpha & 0 & -\cos\alpha & 0 \\ 
0 & \sin\beta & 0 & -\cos\beta \end{pmatrix} ~\text{or}~ 
F = 
\begin{pmatrix} \cos\alpha & 0 & 0 & \sin\alpha \\
0 & \cos\beta & \sin\beta & 0 \\ 
0 & \sin\beta & -\cos\beta & 0 \\ 
\sin\alpha & 0 & 0 & -\cos\alpha \end{pmatrix}, $$
with $\cos\alpha \neq \cos\beta$. Note that the eigenvalues of $f^{\perp}$ are opposite to the eigenvalues of $f$. By changing the order of $\xi_1$ and $\xi_2$ if necessary, one can assume that the eigenvalue of $\xi_1$, respectively $\xi_2$, for $f^{\perp}$ is opposite to the eigenvalue of $X_1$, respectively $X_2$, for $f$. Then the matrix of $F$ has the desired form.
\end{proof}

The following lemma expresses the derivatives of the functions $\alpha$ and $\beta$ introduced in Lemma~\ref{FrameLemma} and the Levi Civita connection of the surface in terms of its second fundamental form.

\begin{lemma}\label{differentialEq}
Let $\Phi: \Sigma \to \sh$ be a PMC surface whose Hopf differential \eqref{eq:Hopf} vanishes. Using the notations of Lemma \ref{FrameLemma}, the functions $\alpha$ and $\beta$ satisfy the following system of PDEs:

\begin{align}
& X_1(\alpha) = -2 \langle h(X_1,X_1), \xi_1 \rangle, & & X_2(\alpha) = 0, \label{eq:Xalpha} \\
& X_1(\beta) = 0, & & X_2(\beta) = -2 \langle h(X_2,X_2), \xi_2 \rangle. \label{eq:Xbeta}
\end{align}
Moreover, the Levi Civita connection of $\Sigma$ is locally completely determined by
\begin{equation} \label{eq:nablaX1X2}
\langle \nabla_{X_1}X_1, X_2 \rangle \!=\! \frac{\sin\beta}{\cos\alpha\!-\!\cos\beta} \langle h(X_1,X_1),\xi_2 \rangle, \ \langle \nabla_{X_2}X_1, X_2 \rangle \!=\! \frac{\sin\alpha}{\cos\alpha\!-\!\cos\beta} \langle h(X_2,X_2),\xi_1 \rangle.
\end{equation}
\end{lemma}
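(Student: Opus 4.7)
The proof will hinge on the parallelism of the product structure $F$ with respect to $\tilde\nabla$, observed after \eqref{eq:defF}: for every tangent $X$ and every vector field $Y$,
\begin{equation*}
\tilde\nabla_X (FY) = F(\tilde\nabla_X Y).
\end{equation*}
Before applying it, I first establish the preliminary fact $h(X_1,X_2)=0$. In the frame $(X_1,X_2)$, the tensor $f$ is diagonal with eigenvalues $\cos\alpha$ and $\cos\beta$, so formula \eqref{eq:AH} shows that $A_H$ is diagonal in $(X_1,X_2)$ with eigenvalues $\|H\|^2 \pm (\cos\beta-\cos\alpha)/8$. These eigenvalues are distinct because $\cos\alpha\neq\cos\beta$ by Lemma \ref{FrameLemma}; since Lemma \ref{Ricci} guarantees that each shape operator $A_{\xi_j}$ commutes with $A_H$, the operators $A_{\xi_j}$ must also be diagonal in $(X_1,X_2)$, whence $\langle h(X_1,X_2),\xi_j\rangle=0$ for $j=1,2$.

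Next, I apply $\tilde\nabla F=0$ with $Y=X_1$, using \eqref{eq:Gauss} and \eqref{eq:Weingarten} to expand each side in the adapted frame. From \eqref{eq:matrixF}, $FX_1=\cos\alpha\, X_1+\sin\alpha\,\xi_1$ and $F\xi_1=\sin\alpha\, X_1-\cos\alpha\,\xi_1$. Writing $\nabla_{X_i}X_1 = c_i X_2$ and separating the $X_1$-, $X_2$-, $\xi_1$-, $\xi_2$-components of the identity $\tilde\nabla_{X_i}(FX_1)=F(\tilde\nabla_{X_i}X_1)$, the $X_1$- and $\xi_1$-components both collapse to
\begin{equation*}
X_i(\alpha) = -2\langle h(X_i,X_1),\xi_1\rangle,
\end{equation*}
which, combined with $h(X_1,X_2)=0$, yields \eqref{eq:Xalpha}. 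An identical computation with $Y=X_2$ produces $X_i(\beta)=-2\langle h(X_i,X_2),\xi_2\rangle$, hence \eqref{eq:Xbeta}.

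The two connection coefficients in \eqref{eq:nablaX1X2} are extracted from the $X_2$-component of the same identity with $Y=X_1$. After collecting terms, and noting that the normal connection coefficient $\langle\nabla^{\perp}_{X_i}\xi_1,\xi_2\rangle$ contributes only to the $\xi_2$-component, this reduces to
\begin{equation*}
(\cos\alpha-\cos\beta)\, c_i = \sin\alpha\,\langle h(X_i,X_2),\xi_1\rangle + \sin\beta\,\langle h(X_i,X_1),\xi_2\rangle.
\end{equation*}
Substituting $i=1$ and using $h(X_1,X_2)=0$ kills the first term on the right, giving the first formula in \eqref{eq:nablaX1X2}; substituting $i=2$ and again using $h(X_1,X_2)=0$ kills the second term, giving the second formula. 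Since $(X_1,X_2)$ is orthonormal and $\Si$ is two-dimensional, the scalars $c_1,c_2$ completely determine $\nabla$.

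\textbf{Expected obstacle.} The only non-mechanical step is the preliminary $h(X_1,X_2)=0$; without it, the identity would yield $X_i(\alpha)$ depending on both $i=1$ and $i=2$ rather than the sharper \eqref{eq:Xalpha}--\eqref{eq:Xbeta}, and each connection coefficient would be a sum of two second-fundamental-form contributions rather than a single one. Once that reduction is in place, the rest is a careful but direct coefficient comparison in the adapted frame.
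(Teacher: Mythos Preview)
Your proposal is correct and follows essentially the same approach as the paper: both exploit the parallelism $\tilde\nabla_X(FY)=F\tilde\nabla_XY$ with $X,Y\in\{X_1,X_2\}$, expand via the Gauss and Weingarten formulas using the block form \eqref{eq:matrixF}, and invoke $h(X_1,X_2)=0$ to obtain the stated equations. Your justification of $h(X_1,X_2)=0$ via Lemma~\ref{Ricci} (that $A_H$ has distinct eigenvalues in this frame and commutes with every $A_{\xi_j}$) is exactly the argument the paper sketches just after Lemma~\ref{ShapeOperatorH}; the paper's proof simply cites that lemma without repeating the reasoning.
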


\begin{proof}
Recall that the product structure $F$ is parallel on $\sh$, i.e., that $\tilde\nabla_X FY - F \tilde\nabla_XY =~0$ for all vector fields $X$ and $Y$ on $\sh$, where $\tilde\nabla$ is the Levi Civita connection of $\sh$. Equations \eqref{eq:Xalpha}--\eqref{eq:nablaX1X2} follow by taking $X, Y \in \{X_1,X_2\}$ and using \eqref{eq:Gauss}, \eqref{eq:Weingarten} and \eqref{eq:matrixF}. Moreover, we used that $\|X_1\|^2 = \|X_2\|^2 = 1$ and that $h(X_1,X_2) = 0$, by Lemma \ref{ShapeOperatorH}.
\end{proof}

One can now describe the second fundamental form of a PMC surface in $\sh$ with vanishing Hopf differential with respect to $X_1$ and $X_2$.

\begin{lemma} \label{lem:h}
Let $\Phi: \Sigma \to \sh$ be a PMC surface whose Hopf differential \eqref{eq:Hopf} vanishes. Using the notations of Lemma \ref{FrameLemma}, the frame $(X_1,X_2)$ diagonalizes all shape operators of $\Phi$. In particular,
\begin{equation} \label{eq:matrixAH}
A_H = \begin{pmatrix} 
\norm{H}^2 - \frac{\cos\alpha-\cos\beta}{8} & 0 \\ 0 & \norm{H}^2 + \frac{\cos\alpha-\cos\beta}{8} 
\end{pmatrix}
\end{equation}
with respect to $(X_1,X_2)$. Define a local function $\gamma$ such that $H/\|H\| = (\cos\gamma) \xi_1 + (\sin\gamma) \xi_2$. Then $\tilde H$, defined by $\tilde H/\|H\| = -(\sin\gamma) \xi_1 + (\cos\gamma) \xi_2$, is a normal vector field with length $\|H\|$, perpendicular to $H$, and there exists a local function $\nu$ such that
\begin{equation}\label{eq:matrixHtilde} 
A_{\tilde H} = \begin{pmatrix} \nu \frac{\cos\alpha-\cos\beta}{8} & 0 \\ 
0 & -\nu \frac{\cos\alpha-\cos\beta}{8} 
\end{pmatrix} 
\end{equation}
with respect to $(X_1,X_2)$. The function $\gamma$ satisfies the PDEs
\begin{equation} \label{eq:Xgamma}
\begin{aligned}
& X_1(\gamma) = \frac{-\sin\alpha}{\cos\alpha\!-\!\cos\beta} \langle h(X_1,X_1),\xi_2 \rangle, & & X_2(\gamma) = \frac{-\sin\beta}{\cos\alpha\!-\!\cos\beta} \langle h(X_2,X_2),\xi_1 \rangle,
\end{aligned}
\end{equation}
whereas the function $\nu$ satisfies the PDEs
\begin{equation} \label{eq:Xnu}
\begin{aligned}
& X_1(\nu) = \frac{4\norm{H} \sin\alpha (\sin\gamma - \nu\cos\gamma)}{\cos\alpha-\cos\beta}, 
& & X_2(\nu) = \frac{4\norm{H} \sin\beta (\cos\gamma + \nu\sin\gamma)}{\cos\alpha-\cos\beta}.
\end{aligned}
\end{equation}
\end{lemma}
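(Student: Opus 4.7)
My plan is to handle the lemma in three stages: the two matrix formulas (which also include the diagonalisation of all shape operators and the construction of $\tilde H$ and $\nu$), the PDEs for $\gamma$, and the PDEs for $\nu$. For the first stage, in the orthonormal frame $(X_1,X_2)$ of Lemma \ref{FrameLemma} the tangential component $f$ of $F$ is $\mathrm{diag}(\cos\alpha,\cos\beta)$ by \eqref{eq:matrixF}, so $\mathrm{Adj}(f) = \mathrm{diag}(\cos\beta,\cos\alpha)$, and \eqref{eq:matrixAH} drops out of \eqref{eq:AH}. On $\Sigma_0$ the two diagonal entries of $A_H$ differ, so Lemma \ref{Ricci} forces every normal shape operator to commute with $A_H$, hence to be diagonal in $(X_1,X_2)$. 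The vector field $\tilde H$ is $H$ rotated by $\pi/2$ in the $(\xi_1,\xi_2)$-plane, so it automatically has length $\norm{H}$ and is orthogonal to $H$; its shape operator $A_{\tilde H}$ is diagonal and $\mathrm{tr}(A_{\tilde H}) = 2\ip{H}{\tilde H} = 0$, so $A_{\tilde H} = \mathrm{diag}(\tau,-\tau)$ for a local function $\tau$, and setting $\nu := 8\tau/(\cos\alpha-\cos\beta)$ produces \eqref{eq:matrixHtilde}.

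For the PDEs for $\gamma$, the PMC hypothesis combined with $\norm{H}$ being constant (which follows from $0 = X\ip{H}{H} = 2\ip{\nabla^\bot_X H}{H}$) yields $\nabla^\bot \eta = 0$, where $\eta = \cos\gamma\,\xi_1 + \sin\gamma\,\xi_2$. Expanding this identity in the frame $(\xi_1,\xi_2)$ gives $X_i(\gamma) = -\ip{\nabla^\bot_{X_i}\xi_1}{\xi_2}$. The right-hand side is evaluated from $\tilde\nabla F = 0$: the equation $\tilde\nabla_{X_i}F\xi_1 = F\tilde\nabla_{X_i}\xi_1$, expanded with \eqref{eq:Gauss}, \eqref{eq:Weingarten} and \eqref{eq:matrixF} and projected onto $\xi_2$, gives $\ip{\nabla^\bot_{X_i}\xi_1}{\xi_2} = \sin\alpha_i\,\ip{h(X_i,X_i)}{\xi_2}/(\cos\alpha-\cos\beta)$ with $\alpha_1 = \alpha$ and $\alpha_2 = \beta$, which is precisely \eqref{eq:Xgamma}.

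For the PDEs for $\nu$, a short calculation analogous to the one for $\eta$ shows that $\tilde H/\norm{H}$ is also parallel in the normal bundle (using $X_i(\gamma) = -\ip{\nabla^\bot_{X_i}\xi_1}{\xi_2}$ established above), so $\tilde H$ itself is parallel, and the Codazzi identity of Lemma \ref{Codazzi} rephrased for the shape operator $A_{\tilde H}$ becomes
\begin{equation*}
(\nabla_{X_1} A_{\tilde H})X_2 - (\nabla_{X_2} A_{\tilde H})X_1 = -\bigl(\tilde R(X_1,X_2)\tilde H\bigr)^\top.
\end{equation*}
A direct computation from \eqref{eq:Rtilde} and \eqref{eq:matrixF} gives $\bigl(\tilde R(X_1,X_2)\tilde H\bigr)^\top = \tfrac{1}{2}\norm{H}\bigl(\sin\beta\cos\gamma\,X_1 + \sin\alpha\sin\gamma\,X_2\bigr)$, while the left-hand side is expanded using the diagonal form \eqref{eq:matrixHtilde} and the connection coefficients \eqref{eq:nablaX1X2}. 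Projecting onto $X_1$ and $X_2$ produces two scalar equations for $X_i(\tau)$ with $\tau = \nu(\cos\alpha-\cos\beta)/8$. Expanding $X_i(\tau)$ by the Leibniz rule, substituting $X_i(\cos\alpha-\cos\beta)$ from \eqref{eq:Xalpha}--\eqref{eq:Xbeta}, and applying the trace identities $\ip{h(X_1,X_1)}{\xi_1}+\ip{h(X_2,X_2)}{\xi_1} = 2\norm{H}\cos\gamma$ and $\ip{h(X_1,X_1)}{\xi_2}+\ip{h(X_2,X_2)}{\xi_2} = 2\norm{H}\sin\gamma$ (both immediate from $H = \norm{H}\eta$) isolates $X_i(\nu)$ in the form \eqref{eq:Xnu}. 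The main obstacle is this final step: even with $\tilde H$ parallel the Codazzi identity has to be combined with essentially every relation established so far, and the clean factorisations $\sin\gamma-\nu\cos\gamma$ and $\cos\gamma+\nu\sin\gamma$ in \eqref{eq:Xnu} only surface after the rotation between the frames $(\eta,\tilde\eta)$ and $(\xi_1,\xi_2)$ has been carefully unravelled via these two trace identities.
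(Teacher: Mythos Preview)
Your approach is essentially the same as the paper's: the matrix formulas come from \eqref{eq:AH} and \eqref{eq:matrixF} together with Lemma~\ref{Ricci}, the PDEs for $\gamma$ from $\nabla^\bot H=0$ combined with the parallelism of $F$, and the PDEs for $\nu$ from the Codazzi equation with $\tilde H$ parallel. The only cosmetic difference is that the paper works with the normal Codazzi identity \eqref{eq:codazzi} projected onto $\tilde H$, while you use the dual shape-operator form $(\nabla_{X_1}A_{\tilde H})X_2-(\nabla_{X_2}A_{\tilde H})X_1=-(\tilde R(X_1,X_2)\tilde H)^\top$; the two computations are equivalent and lead to the same cancellations via the trace identities you mention.

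One slip to correct in your write-up of \eqref{eq:Xgamma}: the uniform formula $\ip{\nabla^\bot_{X_i}\xi_1}{\xi_2}=\sin\alpha_i\,\ip{h(X_i,X_i)}{\xi_2}/(\cos\alpha-\cos\beta)$ is wrong for $i=2$. When you expand $\tilde\nabla_{X_2}F\xi_1=F\tilde\nabla_{X_2}\xi_1$ and project onto $\xi_2$, the term $\sin\alpha\,\ip{h(X_2,X_1)}{\xi_2}$ on the left vanishes (since $h(X_1,X_2)=0$), and the nonzero contribution on the right comes instead from $-\ip{A_{\xi_1}X_2}{X_2}\ip{FX_2}{\xi_2}=-\sin\beta\,\ip{h(X_2,X_2)}{\xi_1}$. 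Hence the correct relation is $\ip{\nabla^\bot_{X_2}\xi_1}{\xi_2}=\sin\beta\,\ip{h(X_2,X_2)}{\xi_1}/(\cos\alpha-\cos\beta)$, with $\xi_1$ rather than $\xi_2$ in the second factor, matching \eqref{eq:Xgamma}. (The paper avoids this asymmetry by choosing $(X,Y)=(X_1,\xi_2)$ and $(X,Y)=(X_2,\xi_1)$ separately.) This is a bookkeeping error, not a gap in the argument.
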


\begin{proof}
It follows from \eqref{eq:AH} and \eqref{eq:matrixF} that $A_H$ takes the form \eqref{eq:matrixAH} with respect to $(X_1,X_2)$. In particular, $(X_1,X_2)$ diagonalizes $A_H$. Since Lemma \ref{Ricci} implies that all shape operators commute, $(X_1,X_2)$ diagonalizes all shape operators. Moreover, since $\tilde H$ is perpendicular to $H$, the shape operator $A_{\tilde H}$ is traceless. Therefore, it takes the form \eqref{eq:matrixHtilde} for a local function $\nu$.

To obtain the PDEs \eqref{eq:Xgamma}, first note that $\nabla^{\perp}H=0$ implies that $\nabla^{\perp}_X\xi_1 = -X(\gamma)\xi_2$ and $\nabla^{\perp}_X\xi_2 = X(\gamma)\xi_1$ for all vectors $X$ tangent to $\Sigma$. Now use the parallelism of the product structure $F$ on $\sh$, namely $\tilde\nabla_X FY - F \tilde\nabla_XY =~0$ for all vector fields $X$ and $Y$ on $\sh$, where $\tilde\nabla$ is the Levi Civita connection of $\sh$. More precisely, take $(X,Y)=(X_1,\xi_2)$ and $(X,Y)=(X_2,\xi_1)$ and use \eqref{eq:Gauss}, \eqref{eq:Weingarten}, \eqref{eq:Xalpha}, \eqref{eq:Xbeta} and the fact that $h(X_1,X_2)=0$.

Finally, the PDEs \eqref{eq:Xnu} follow from Lemma \ref{Codazzi}, the Codazzi equation, as follows. Choose $X=Z=X_2$ and $Y=X_1$ in \eqref{eq:codazzi}, and take the inner product with $\tilde H$. The left hand side becomes
$$ \langle (\overline{\nabla}h)(X_2,X_1,X_2)-(\overline{\nabla}h)(X_1,X_2,X_2) , \tilde H \rangle = \frac{\norm{H}}{2}\nu\sin\alpha\cos\gamma + \frac{\cos\alpha-\cos\beta}{8}X_1(\nu), $$
by using that $\tilde H$ is parallel in the normal bundle, that $(X_1,X_2)$ is an orthonormal frame satisfying $h(X_1,X_2) = 0$, and \eqref{eq:Xalpha}, \eqref{eq:Xbeta} and \eqref{eq:nablaX1X2}. On the other hand, the inner product with $\tilde H$ of the right hand side is
$$ \frac 12 \langle F(X_1\wedge X_2)X_1 , \tilde H \rangle = -\frac{1}{2} \langle (\sin\alpha)\xi_1, \tilde H \rangle = \frac{\norm{H}}{2}\sin\alpha\sin\gamma. $$
Equating both expressions yields the formula for $X_1(\nu)$ in the statement. The expression for $X_2(\nu)$ can be found analogously, by choosing $X = Z = X_1$ and $Y = X_2$ in \eqref{eq:codazzi}. 
\end{proof}

\begin{remark}
Equations \eqref{eq:matrixAH} and \eqref{eq:matrixHtilde} determine the second fundamental form with respect to the normal frame $(H,\tilde H)$. Using the relation between this frame and the frame $(\xi_1,\xi_2)$, one can rewrite the terms $\langle h(X_i,X_i),\xi_j \rangle$ (which would allow to make the PDEs \eqref{eq:Xalpha}, \eqref{eq:Xbeta} and \eqref{eq:Xgamma} more explicit). Since we will use these alternative expressions in the rest of this section, we give them here:
\begin{equation} \label{eq:hXiXixij}
\begin{aligned}
& \langle h(X_1,X_1),\xi_1 \rangle = \|H\|\cos\gamma - \frac{1}{8\|H\|}(\cos\gamma+\nu\sin\gamma)(\cos\alpha-\cos\beta), \\
& \langle h(X_1,X_1),\xi_2 \rangle = \|H\|\sin\gamma - \frac{1}{8\|H\|}(\sin\gamma-\nu\cos\gamma)(\cos\alpha-\cos\beta),\\
& \langle h(X_2,X_2),\xi_1 \rangle = \|H\|\cos\gamma + \frac{1}{8\|H\|}(\cos\gamma+\nu\sin\gamma)(\cos\alpha-\cos\beta),\\
& \langle h(X_2,X_2),\xi_2 \rangle = \|H\|\sin\gamma + \frac{1}{8\|H\|}(\sin\gamma-\nu\cos\gamma)(\cos\alpha-\cos\beta).
\end{aligned}
\end{equation}
\end{remark}

The following lemma gives a functional relation between the local functions $\alpha$, $\beta$, $\gamma$ and $\nu$, introduced in Lemma \ref{FrameLemma} and Lemma \ref{lem:h}.

\begin{lemma}\label{nuComp}
Let $\Phi: \Sigma \to \sh$ be a PMC surface such that the Hopf differential \eqref{eq:Hopf} vanishes and use the notations of Lemma \ref{FrameLemma} and Lemma \ref{lem:h}. Then, at all points where $\alpha$, $\beta$, $\gamma$ and $\nu$ are defined, at least one of the following holds: 
\begin{itemize}
\item[(i)] $\sin\alpha = 0$,
\item[(ii)] $\sin\beta = 0$,
\item[(iii)] $8\norm{H}^2 + (\cos\alpha-\cos\beta)((1-\nu^2)\cos(2\gamma)+2\nu\sin(2\gamma)) = 0$.
\end{itemize}
\end{lemma}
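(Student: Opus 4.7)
The plan is to extract the claimed trichotomy from a single integrability condition, namely
\[
X_1(X_2(\nu)) - X_2(X_1(\nu)) = [X_1,X_2](\nu),
\]
where $\nu$ is the function introduced in Lemma \ref{lem:h}. All the ingredients needed to carry this out are already available. On one hand, the two partial derivatives $X_1(\nu)$ and $X_2(\nu)$ are given explicitly by \eqref{eq:Xnu} in terms of $\alpha,\beta,\gamma,\nu$ and $\|H\|$. On the other hand, using $\|X_1\|=\|X_2\|=1$ and $h(X_1,X_2)=0$, one sees that
\[
\nabla_{X_1}X_2 = -\langle \nabla_{X_1}X_1,X_2\rangle X_1, \qquad \nabla_{X_2}X_1 = \langle \nabla_{X_2}X_1,X_2\rangle X_2,
\]
so $[X_1,X_2] = -\langle \nabla_{X_1}X_1,X_2\rangle X_1 - \langle \nabla_{X_2}X_1,X_2\rangle X_2$, and both coefficients are given in closed form by \eqref{eq:nablaX1X2} combined with \eqref{eq:hXiXixij}.

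To keep the computation manageable I would introduce the abbreviations $c = (\cos\alpha-\cos\beta)/8$, $a = \cos\gamma+\nu\sin\gamma$ and $b = \sin\gamma-\nu\cos\gamma$, so that the shape operator data \eqref{eq:hXiXixij} takes the clean form $\langle h(X_1,X_1),\xi_1\rangle = \|H\|\cos\gamma - ac/\|H\|$ and its three companions. Then \eqref{eq:Xalpha}, \eqref{eq:Xbeta}, \eqref{eq:Xgamma} and \eqref{eq:Xnu} can be rewritten compactly, and the chain rule yields
\[
X_i(a) = -b\,X_i(\gamma) + \sin\gamma\,X_i(\nu), \qquad X_i(b) = a\,X_i(\gamma) - \cos\gamma\,X_i(\nu),
\]
and $X_i(c) = (-\sin\alpha\,X_i(\alpha)+\sin\beta\,X_i(\beta))/8$. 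The differentiation of $X_2(\nu) = \|H\|\sin\beta\,a/(2c)$ along $X_1$ and of $X_1(\nu) = \|H\|\sin\alpha\,b/(2c)$ along $X_2$ is now purely mechanical; the result, after multiplying the integrability identity by $2c/\|H\|$, is
\[
\sin\beta\,X_1(a) - \tfrac{\sin\beta\,a}{c}X_1(c) - \sin\alpha\,X_2(b) + \tfrac{\sin\alpha\,b}{c}X_2(c) = -\,p\sin\alpha\,b - q\sin\beta\,a,
\]
with $p = \langle \nabla_{X_1}X_1,X_2\rangle$ and $q = \langle \nabla_{X_2}X_1,X_2\rangle$.

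The main obstacle is controlling the resulting algebra, but everything collapses thanks to two small identities which I would verify first: $a\cos\gamma+b\sin\gamma = 1$ and $a\sin\gamma-b\cos\gamma = \nu$. Using these, each of the four terms on the left hand side and each of the two terms on the right hand side simplifies to a combination of $\|H\|/c$ and $(a^2-b^2)/\|H\|$ with $\sin\alpha\sin\beta$ as common prefactor. Collecting the contributions yields the identity
\[
\frac{3\sin\alpha\sin\beta}{8c\|H\|}\bigl[\|H\|^2 + c(a^2-b^2)\bigr] \;=\; -\,\frac{\sin\alpha\sin\beta}{8c\|H\|}\bigl[\|H\|^2 + c(a^2-b^2)\bigr],
\]
so that $\sin\alpha\sin\beta\,[\|H\|^2 + c(a^2-b^2)] = 0$ at every point of the neighborhood where the frame is defined. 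Since $c\neq 0$ by Lemma \ref{FrameLemma} and $\|H\|\neq 0$ by the PMC hypothesis, and since $a^2-b^2 = (1-\nu^2)\cos(2\gamma)+2\nu\sin(2\gamma)$, multiplying by $8$ yields
\[
\sin\alpha\sin\beta\Bigl[8\|H\|^2 + (\cos\alpha-\cos\beta)\bigl((1-\nu^2)\cos(2\gamma)+2\nu\sin(2\gamma)\bigr)\Bigr] = 0,
\]
which is exactly the claimed trichotomy.
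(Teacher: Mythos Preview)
Your proof is correct and follows the same route as the paper: both extract the trichotomy from the single integrability condition $X_1(X_2(\nu))-X_2(X_1(\nu))=[X_1,X_2](\nu)$, using \eqref{eq:Xalpha}--\eqref{eq:Xnu} and \eqref{eq:hXiXixij}. Your abbreviations $a=\cos\gamma+\nu\sin\gamma$, $b=\sin\gamma-\nu\cos\gamma$, $c=(\cos\alpha-\cos\beta)/8$ and the identities $a\cos\gamma+b\sin\gamma=1$, $a^2-b^2=(1-\nu^2)\cos(2\gamma)+2\nu\sin(2\gamma)$ make the bookkeeping cleaner than in the paper, but the underlying computation is identical.
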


\begin{proof}
The proof consists of simplifying 
\begin{equation} \label{eq:compatnu}
X_1(X_2(\nu)) - X_2(X_1(\nu)) = (\nabla_{X_1}X_2-\nabla_{X_2}X_1)(\nu).
\end{equation}
On one hand, from \eqref{eq:Xalpha}, \eqref{eq:Xbeta}, \eqref{eq:Xgamma} and \eqref{eq:Xnu}, one can calculate
\begin{multline*}
X_1(X_2(\nu)) = -\frac{4\norm{H} \sin\alpha \sin\beta}{(\cos\alpha-\cos\beta)^2} (4\norm{H} \sin\gamma(\nu\cos\gamma-\sin\gamma) \\
+ \langle h(X_1,X_1) , \xi_2 \rangle (\nu \cos\gamma - \sin\gamma) + 2 \langle h(X_1,X_1) , \xi_1 \rangle (\nu \sin\gamma + \cos\gamma)),
\end{multline*}
\vspace{-.6cm}
\begin{multline*}
X_2(X_1(\nu)) = - \frac{4\norm{H} \sin\alpha \sin\beta}{(\cos\alpha-\cos\beta)^2} (4\norm{H} \cos\gamma(\nu\sin\gamma+\cos\gamma) \\
+ \langle h(X_2,X_2) , \xi_1 \rangle (\nu\sin\gamma+\cos\gamma) + 2 \langle h(X_2,X_2) , \xi_2 \rangle (\nu\cos\gamma-\sin\gamma)).
\end{multline*}
On the other hand, it follows from \eqref{eq:nablaX1X2} that
$$ \con{X_1}X_2 - \con{X_2}X_1 = -\frac{\sin\beta}{\cos\alpha-\cos\beta} \langle h(X_1,X_1),\xi_2 \rangle X_1 - \frac{\sin\alpha}{\cos\alpha-\cos\beta} \langle h(X_2,X_2),\xi_1 \rangle X_2, $$
and thus, by \eqref{eq:Xnu},
\begin{multline*} 
(\con{X_1}X_2 - \con{X_2}X_1)(\nu) \\ = \frac{4\norm{H}\sin\alpha\sin\beta}{(\cos\alpha-\cos\beta)^2}(\langle h(X_1,X_1),\xi_2 \rangle (\nu\cos\gamma - \sin\gamma) - \langle h(X_2,X_2),\xi_1 \rangle (\nu\sin\gamma + \cos\gamma)).
\end{multline*}
	
Note that if $\sin\alpha=0$ or $\sin\beta=0$, then $X_1(X_2(\nu)) = X_2(X_1(\nu)) = (\con{X_1}X_2 - \con{X_2}X_1)(\nu) = 0$. If $\sin\alpha \neq 0$ and $\sin\beta\neq 0$, then \eqref{eq:compatnu} is satisfied if and only if
\begin{align*}
2\norm{H} = (&\nu\sin\gamma+\cos\gamma)(\langle h(X_1,X_1),\xi_1 \rangle - \langle h(X_2,X_2),\xi_1 \rangle) \\ 
&+ (\nu\cos\gamma-\sin\gamma)(\langle h(X_1,X_1),\xi_2 \rangle - \langle h(X_2,X_2),\xi_2 \rangle).
\end{align*}
Using \eqref{eq:hXiXixij}, one can check that this equality is equivalent to condition (iii) in the statement.
\end{proof}

\begin{remark} \label{rem:interpretationSin0}
There is a geometric interpretation for the cases (i) and (ii) of Lemma \ref{nuComp}. From \eqref{eq:defF} and \eqref{eq:matrixF} it is clear that they mean that $X_1$, respectively $X_2$, lies in either $T\bb{S}^2\times \lbrace 0 \rbrace$ or $\lbrace 0 \rbrace \times T\bb{H}^2$. In the remainder of this section, it will be proven that case (iii) of Lemma \ref{nuComp} is redundant, that is, that the situation just described holds for any PMC surface with vanishing in Hopf differential in $\sh$.
\end{remark}

\begin{lemma}\label{gammaConstant}
Let $\Phi: \Sigma \to \sh$ be a PMC surface such that the Hopf differential \eqref{eq:Hopf} vanishes and use the notations of Lemma \ref{FrameLemma} and Lemma \ref{lem:h}. On a connected open subset of $\Sigma$ where $\alpha$, $\beta$ and $\gamma$ are defined, and $\sin\alpha \sin\beta$ nowhere vanishes, $\gamma$ is constant.
\end{lemma}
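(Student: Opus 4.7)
The plan is to show that $X_1(\gamma) = X_2(\gamma) = 0$ on the given connected open subset, which then implies $\gamma$ is constant. Since $\sin\alpha\sin\beta$ does not vanish there, Lemma~\ref{nuComp} excludes cases (i) and (ii), so case (iii) holds identically: $8\|H\|^2 + \delta\Gamma = 0$ with $\delta = \cos\alpha - \cos\beta$ and $\Gamma = (1-\nu^2)\cos(2\gamma) + 2\nu\sin(2\gamma)$. In particular, neither $\delta$ nor $\Gamma$ vanishes on the subset, and one has the useful relation $\delta = -8\|H\|^2/\Gamma$.

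Using this expression for $\delta$ together with the formulas \eqref{eq:hXiXixij}, the PDEs for $\gamma$ from Lemma~\ref{lem:h} simplify to
\[
X_1(\gamma) = \frac{\sin\alpha\,(B + \Gamma\sin\gamma)}{8\|H\|}, \qquad X_2(\gamma) = \frac{\sin\beta\,(\Gamma\cos\gamma - A)}{8\|H\|},
\]
where $A = \cos\gamma + \nu\sin\gamma$ and $B = \sin\gamma - \nu\cos\gamma$. Since $\sin\alpha\sin\beta \neq 0$, the vanishing of $X_1(\gamma)$ and $X_2(\gamma)$ reduces to the two algebraic conditions $B + \Gamma\sin\gamma = 0$ and $\Gamma\cos\gamma - A = 0$.

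To obtain these, I would differentiate (iii) along $X_1$ and along $X_2$, using \eqref{eq:Xalpha}--\eqref{eq:Xnu} and \eqref{eq:hXiXixij} to expand each derivative, and then eliminate $\delta$ via the relation $\delta\Gamma = -8\|H\|^2$. This yields two polynomial relations in $\sin\gamma$, $\cos\gamma$, and $\nu$. Combining these with (iii) itself and the structural identities $A^2+B^2 = 1+\nu^2$, $A^2-B^2 = \Gamma$, $2AB = (1-\nu^2)\sin(2\gamma) - 2\nu\cos(2\gamma)$, together with the linear identities $A\cos\gamma + B\sin\gamma = 1$ and $A\sin\gamma - B\cos\gamma = \nu$ that follow directly from the definitions of $A$ and $B$, the system should collapse to $B + \Gamma\sin\gamma = 0$ and $\Gamma\cos\gamma - A = 0$, completing the proof.

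The hardest step will be this last algebraic collapse: the raw expressions obtained by differentiating (iii) are lengthy, and untangling them requires disciplined use of the structural identities above — in particular the two linear relations expressing $(A,B)$ in the basis $(\cos\gamma + i\sin\gamma,\, \sin\gamma - i\cos\gamma)$ with coefficients $(1,\nu)$ — to isolate the desired conclusion. A degenerate case to check is $\sin\gamma\cos\gamma = 0$, where the two algebraic conditions must be verified directly from the simplified PDE expressions.
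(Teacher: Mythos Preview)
Your setup is correct: the simplifications
\[
X_1(\gamma) = \frac{\sin\alpha\,(B + \Gamma\sin\gamma)}{8\|H\|}, \qquad
X_2(\gamma) = \frac{\sin\beta\,(\Gamma\cos\gamma - A)}{8\|H\|},
\]
obtained by inserting $\delta = -8\|H\|^2/\Gamma$ into \eqref{eq:Xgamma} and \eqref{eq:hXiXixij}, are valid. The difficulty is exactly where you flagged it, and it is a genuine gap rather than a routine computation.

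Differentiating (iii) along $X_1$ and $X_2$ and eliminating $\delta$ via $\delta\Gamma = -8\|H\|^2$ gives two polynomial relations in $(\gamma,\nu)$, say $E_1'(\gamma,\nu)=0$ and $E_2'(\gamma,\nu)=0$. Your plan is to show that these force the two extra relations $B + \Gamma\sin\gamma = 0$ and $\Gamma\cos\gamma - A = 0$. But this containment of varieties does not hold at the level of the algebra. For instance, specialising to $\gamma=0$ (so $A=1$, $B=-\nu$, $\Gamma=1-\nu^2$) the equations $E_1'=0$ and $E_2'=0$ reduce respectively to $3\nu^4-3\nu^2-2=0$ and $\nu(7\nu^2-5)=0$, while your target relations become $\nu=0$ and $\nu^2=0$. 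Thus $E_1'=0$ alone is satisfied for $\nu^2=(3+\sqrt{33})/6$, yet your targets require $\nu=0$; so $E_1'=0$ by itself does not imply them. One only escapes contradiction here because $E_1'$ and $E_2'$ have no \emph{common} root at $\gamma=0$, i.e.\ the implication is vacuous at that value. There is no purely algebraic mechanism that will make the system ``collapse'' to your two targets at the (a priori possible) values of $\gamma$ where $E_1'$ and $E_2'$ do share a root.

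The paper's argument avoids this trap by not attempting to prove $d\gamma=0$ pointwise. Instead it views $E_1'=0$ and $E_2'=0$ as two polynomial equations in $\nu$ with coefficients depending on $\gamma$, and considers their resultant $R(\cos\gamma,\sin\gamma)$. The computation at $\gamma=0$ above shows $R$ is not the zero polynomial, so the admissible values of $\gamma$ lie in the finite zero set of $R$. A continuous function into a discrete set is locally constant, hence $\gamma$ is constant on the connected subset. The equalities $B+\Gamma\sin\gamma=0$ and $\Gamma\cos\gamma-A=0$ are then \emph{consequences} of $\gamma$ being constant (via your simplified formulas for $X_j(\gamma)$), not the route to it. To repair your argument you would either have to carry out this resultant step, or supply a different reason why the solution set of $E_1'=E_2'=0$ in $(\gamma,\nu)$-space is zero-dimensional.
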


\begin{proof}
Let $U \subseteq \Sigma$ be an open subset as in the statement. Then equality (iii) from Lemma~\ref{nuComp} has to hold everywhere on $U$. By differentiating this equality in the directions of $X_1$ and $X_2$ respectively, and using \eqref{eq:Xalpha}, \eqref{eq:Xbeta}, \eqref{eq:Xgamma} and \eqref{eq:Xnu}, one obtains
\begin{equation}
\begin{aligned}
&((1-\nu^2)\cos(2\gamma)+2\nu\sin(2\gamma))\ip{h(X_1,X_1)}{\xi_1} \\ & - (2\nu\cos(2\gamma) -(1-\nu^2)\sin(2\gamma))\ip{h(X_1,X_1)}{\xi_2} \\
& + 4 \norm{H} (\sin\gamma - \nu \cos\gamma)(\sin(2\gamma) - \nu\cos(2\gamma)) = 0,
\end{aligned} \label{eq:X1(iii)}
\end{equation}
\begin{equation}
\begin{aligned}
& (2\nu\cos(2\gamma)-(1-\nu^2)\sin(2\gamma))\ip{h(X_2,X_2)}{\xi_1} \\
& + ((1-\nu^2)\cos(2\gamma)+2\nu\sin(2\gamma))\ip{h(X_2,X_2)}{\xi_2} \\ 
& - 4 \norm{H} (\cos\gamma + \nu \sin\gamma)(\sin(2\gamma) - \nu\cos(2\gamma)) = 0.
\end{aligned} \label{eq:X2(iii)}
\end{equation}

In combination with \eqref{eq:hXiXixij}, this implies that equation (iii) of Lemma \ref{nuComp} and equations \eqref{eq:X1(iii)} and \eqref{eq:X2(iii)} give rise to a system of the form
\begin{equation*}
\begin{cases}
A_0(\gamma,\nu)(\cos\alpha-\cos\beta) + 8\norm{H}^2 = 0, \\
A_1(\gamma,\nu)(\cos\alpha-\cos\beta) + B_1(\gamma,\nu) = 0, \\
A_2(\gamma,\nu)(\cos\alpha-\cos\beta) + B_2(\gamma,\nu) = 0,
\end{cases}
\end{equation*}
where $A_j(\gamma,\nu)$ and $B_j(\gamma,\nu)$ are polynomials in $\cos\gamma$, $\sin\gamma$ and $\nu$. For this system to have a solution for $\cos\alpha-\cos\beta$, it is necessary that
\begin{equation}\label{eq:systemfornu}
\begin{cases}
8\norm{H}^2 A_1(\gamma,\nu) = A_0(\gamma,\nu)B_1(\gamma,\nu), \\
8\norm{H}^2 A_2(\gamma,\nu) = A_0(\gamma,\nu)B_2(\gamma,\nu).
\end{cases}
\end{equation}

Now look at system \eqref{eq:systemfornu} as a system of polynomial equations in $\nu$ and consider its resultant $R(\cos\gamma,\sin\gamma)$, which is a polynomial function of $\cos\gamma$ and $\sin\gamma$. If $R$ is not the zero function, the possible values for $\gamma$ come from a discrete set, which implies that $\gamma$ is constant on the connected subset $U \subseteq \Sigma$. Therefore, to finish the proof, it suffices to exclude the case that $R$ is the zero function.

Assume, by contradiction, that $R$ is identically zero. This implies that \eqref{eq:systemfornu} has a solution for every value of $\gamma$. One can check that this is not true for $\gamma=0$. A direct computation shows that 
$$ A_0(0,\nu) = 1-\nu^2, \qquad A_1(0,\nu) = -\frac{1+\nu^2}{8\norm{H}}, \qquad A_2(0,\nu) = -\frac{\nu(1+\nu^2)}{8\norm{H}}, $$
$$ B_1(0,\nu) = \norm{H}(1 + 3\nu^2), \qquad  B_2(0,\nu) = -6\norm{H}\nu. $$
Therefore, system \eqref{eq:systemfornu}, in the case $\gamma=0$, is equivalent to
$$ \begin{cases}
-(1+\nu^2)=(1-\nu^2)(1+3\nu^2), \\
\nu(1+\nu^2)=6\nu(1-\nu^2).	
\end{cases}$$
It is clear that these two polynomial equations do not have a common solution.
\end{proof}

\begin{theorem}\label{nonTrivialIntersection}
Let $\Phi\colon\Sigma\to \sh$ be a PMC surface such that the Hopf differential \eqref{eq:Hopf} vanishes. Then, for all $p\in \Sigma$, the intersection of $T_p\Sigma$ with either $T_{\pi_1(p)}\bb{S}^2 \times \{0\}$ or $\{0\} \times T_{\pi_2(p)}\bb{H}^2$ is non-trivial.
\end{theorem}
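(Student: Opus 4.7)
The remark following Lemma \ref{nuComp} already pinpoints what needs to be proved. On the open dense set $\Sigma_0 \subseteq \Sigma$ of non-pseudo-umbilical points (Lemma \ref{noUmbilics}) the adapted frame of Lemma \ref{FrameLemma} exists, and from \eqref{eq:matrixF} combined with \eqref{eq:defF}, the conclusion of the theorem at a point $p \in \Sigma_0$ is equivalent to $\sin\alpha(p) = 0$ or $\sin\beta(p) = 0$. The extension from $\Sigma_0$ to all of $\Sigma$ is automatic, since the condition that the smooth symmetric endomorphism $f$ of \eqref{eq:deff} has $1$ or $-1$ as an eigenvalue (equivalently, $\det(f^2-\mathrm{id}) = 0$) is closed. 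The heart of the proof is therefore to rule out case (iii) of Lemma \ref{nuComp} on every open subset of $\Sigma_0$.

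I would argue this by contradiction: suppose $U \subseteq \Sigma_0$ is a nonempty connected open set on which $\sin\alpha\,\sin\beta$ nowhere vanishes. Lemma \ref{gammaConstant} then gives that $\gamma$ is constant on $U$, so \eqref{eq:Xgamma} forces $\langle h(X_1,X_1),\xi_2\rangle = 0$ and $\langle h(X_2,X_2),\xi_1\rangle = 0$. Substituting these into the corresponding formulas of \eqref{eq:hXiXixij} yields a linear pair in $(\sin\gamma,\cos\gamma)$; multiplying the first equation by $\sin\gamma$, the second by $\cos\gamma$ and adding gives
\[ 8\|H\|^2 \;=\; -\bigl(\cos(2\gamma) + \nu\sin(2\gamma)\bigr)(\cos\alpha - \cos\beta). \]
Comparing with the equality in case (iii) of Lemma \ref{nuComp} cancels the factor $\cos\alpha - \cos\beta$ and reduces to the algebraic identity $\nu\bigl(\sin(2\gamma) - \nu\cos(2\gamma)\bigr) = 0$. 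Pointwise on $U$ this forces $\nu \in \{0,\tan(2\gamma)\}$, and since $\gamma$ is constant, continuity of $\nu$ implies that $\nu$ is itself constant on $U$.

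The contradiction now falls out of \eqref{eq:Xnu}: constancy of $\nu$ together with $\sin\alpha,\sin\beta \neq 0$ and $\|H\|\neq 0$ yields the homogeneous linear system
\[ \sin\gamma - \nu\cos\gamma = 0, \qquad \cos\gamma + \nu\sin\gamma = 0, \]
whose determinant is $1+\nu^2 > 0$, forcing $\sin\gamma = \cos\gamma = 0$, which is absurd. I expect the main obstacle not to be this short closing step, but the middle algebraic manipulation: one must extract from the three available pieces of data (the two identities coming from $X_i(\gamma)=0$ via \eqref{eq:hXiXixij}, and case (iii) of Lemma \ref{nuComp}) exactly the clean factor $\nu(\sin(2\gamma)-\nu\cos(2\gamma))$. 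The Lorentzian sign conventions of \eqref{eq:Phiproduct}--\eqref{eq:frenet}, which differ from the compact setting of \cite{TorralboUrbano}, have to be tracked carefully throughout so that the algebraic constraint on $\nu$ is strong enough to close the argument.
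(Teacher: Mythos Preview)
Your proof is correct and follows essentially the same route as the paper: reduce to $\Sigma_0$, assume $\sin\alpha\sin\beta\neq 0$ on an open set, invoke Lemma~\ref{gammaConstant} to make $\gamma$ constant, use \eqref{eq:Xgamma} and \eqref{eq:hXiXixij} to force $\nu$ constant, and finish via \eqref{eq:Xnu}. The only cosmetic difference is in the middle algebraic step: the paper eliminates $\cos\alpha-\cos\beta$ directly from the two vanishing conditions $\langle h(X_1,X_1),\xi_2\rangle=\langle h(X_2,X_2),\xi_1\rangle=0$ (viewed as an overdetermined system for $\cos\alpha-\cos\beta$) to obtain $\sin(2\gamma)-\nu\cos(2\gamma)=0$ without appealing to case~(iii) again, whereas you take one linear combination and compare with~(iii), picking up an extra factor of $\nu$---but this changes nothing in the endgame.
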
 
\begin{proof}
First note that it is sufficient to prove the result for an open and dense subset of $\Sigma$. One way to see this is noting that $T_p\Sigma \cap (T_{\pi_1(p)}\bb{S}^2 \times \{0\})$ or $T_p\Sigma \cap (\{0\} \times T_{\pi_2(p)}\bb{H}^2)$ is non-trivial if and only if the  determinant of the derivative of $\pi_1\circ\Phi$ or of $\pi_2\circ\Phi$ at $p$ vanishes. If, at every point of an open and dense subset of $\Sigma$, one of these determinants vanishes, this must also be the case on its complement by continuity.

The result will be proven for the open and dense subset $\Sigma_0 \subseteq \Sigma$ of points where $\Phi$ is not pseudo-umbilical. Take  $p\in\Sigma_0$ and define the frame $(X_1,X_2,\xi_1,\xi_2)$ and the functions $\alpha$, $\beta$, $\gamma$ and $\nu$ in a neighborhood of $p$ as in Lemma \ref{FrameLemma} and Lemma \ref{lem:h}. As noted in Remark \ref{rem:interpretationSin0}, it is sufficient to show that $\sin\alpha(p)=0$ or $\sin\beta(p)=0$.

Assume, by contradiction, that $\sin\alpha(p)\sin\beta(p) \neq 0$. By continuity, there is a neighborhood $U$ of $p$ in which $\sin\alpha\sin\beta$ does not vanish and, by Lemma \ref{gammaConstant}, it follows that $\gamma$ is constant in~$U$. By \eqref{eq:Xgamma}, $X_1(\gamma)=X_2(\gamma)=0$ implies $\langle h(X_1,X_1),\xi_2 \rangle = \langle h(X_2,X_2),\xi_1 \rangle = 0$, which, by \eqref{eq:hXiXixij}, is equivalent to
$$\begin{cases}
\displaystyle{\|H\|\sin\gamma - \frac{1}{8\|H\|}(\sin\gamma-\nu\cos\gamma)(\cos\alpha-\cos\beta) = 0,} \\
\displaystyle{\|H\|\cos\gamma + \frac{1}{8\|H\|}(\cos\gamma+\nu\sin\gamma)(\cos\alpha-\cos\beta) = 0.} \\ 
\end{cases}$$
Looking at these equations as a system in $\cos\alpha-\cos\beta$, the determinant of the system must vanish, which is equivalent to $\sin(2\gamma)-\nu\cos(2\gamma)=0$. Since $\gamma$ is constant, this implies that $\nu$ is constant. By \eqref{eq:Xnu}, $X_1(\nu)=X_2(\nu)=0$ implies 
$$\begin{cases}
\sin\gamma-\nu\cos\gamma=0, \\
\cos\gamma+\nu\sin\gamma=0.
\end{cases}$$
This can be seen as a system of equations for $\nu$ with determinant $\sin^2\gamma+\cos^2\gamma=1\neq0$, so it cannot have a solution for $\nu$. This is the contradiction that finishes the proof.
\end{proof}

\begin{remark}\label{inclusions}
Note that we have the following nested families of PMC surfaces of $\sh$: 
\begin{multline*}
\lbrace \text{PMC spheres} \rbrace \subseteq \lbrace \text{PMC surfaces with vanishing Hopf differential } \Theta \rbrace \\
\subseteq \left\lbrace \begin{array}{c} \text{PMC surfaces } \Sigma \text{ such that for every }p\in\Sigma,~T_p\Sigma \text{ intersects } \\ T_{\pi_1(p)}\bb{S}^2\times \lbrace 0 \rbrace \text{ or } \lbrace 0 \rbrace\times T_{\pi_2(p)}\bb{H}^2 \text{ non-trivially} \end{array} \right\rbrace.
\end{multline*}
\end{remark}
In the remainder of the text, we give a classification result for all three families mentioned in the remark above. The following Lemma characterizes the last family in terms of the K\"ahler functions $C_1$ and $C_2$ defined in equation \eqref{eq:defKahler}.

\begin{lemma}\label{Csquared}
	Let $\Phi\colon \Sigma \to \sh$ be a surface of $\sh$. For any point $p\in \Sigma,~T_p\Sigma$ intersects $T_{\pi_1(p)}\bb{S}^2\times \lbrace 0 \rbrace$ or $\lbrace 0 \rbrace\times T_{\pi_2(p)}\bb{H}^2$ non-trivially if and only if the K\"ahler functions $C_1$ and $C_2$ satisfy $C_1^2(p) = C_2^2(p)$. Moreover, if $\Phi$ is a PMC immersion and $T_p\Sigma$ intersects $T_{\pi_1(p)}\bb{S}^2\times \lbrace 0 \rbrace$ or $\lbrace 0 \rbrace\times T_{\pi_2(p)}\bb{H}^2$ non-trivially for every point $p\in \Sigma$, then either $C_1 = C_2$ everywhere or $C_1 = -C_2$ everywhere.
\end{lemma}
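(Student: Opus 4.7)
\medskip

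\noindent\textbf{Proof plan.} The first assertion is a pointwise linear-algebra statement, so I would work at a single point $p\in\Sigma$. Pick a positively oriented orthonormal basis $(e_1,e_2)$ of $T_p\Sigma$, so that $\omega_\Sigma(e_1,e_2)=1$ and hence $C_j(p)=\omega_j(e_1,e_2)=\ip{J_j e_1}{e_2}$. Decompose each $e_i=a_i+b_i$ with $a_i\in T_{\pi_1(p)}\bb{S}^2\times\{0\}$ and $b_i\in\{0\}\times T_{\pi_2(p)}\bb{H}^2$. Using the definitions of $J_1$ and $J_2$ together with the product form of the metric, a direct calculation gives
\begin{equation*}
C_1(p)+C_2(p)=2\ip{J_{\bb{S}^2}a_1}{a_2},\qquad C_1(p)-C_2(p)=2\ip{J_{\bb{H}^2}b_1}{b_2}.
\end{equation*}
Now in a $2$-dimensional inner product space, $\ip{Ju}{v}$ is (up to sign) the determinant of $(u,v)$, which vanishes precisely when $u$ and $v$ are linearly dependent. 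Thus $C_1(p)=C_2(p)$ iff $b_1,b_2$ are linearly dependent in $T_{\pi_2(p)}\bb{H}^2$ iff $d\pi_2\big|_{T_p\Sigma}$ has nontrivial kernel iff $T_p\Sigma\cap(T_{\pi_1(p)}\bb{S}^2\times\{0\})\neq\{0\}$; analogously $C_1(p)=-C_2(p)$ iff $T_p\Sigma\cap(\{0\}\times T_{\pi_2(p)}\bb{H}^2)\neq\{0\}$. Taking the disjunction yields the equivalence $C_1^2(p)=C_2^2(p)$.

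For the second assertion I would invoke real analyticity. By standard elliptic regularity (Morrey's theorem on analytic nonlinear elliptic systems), a PMC immersion into the real analytic Riemannian manifold $\sh$ is itself real analytic in real analytic charts on $\Sigma$; in particular $C_1$ and $C_2$ are real analytic functions on the connected surface $\Sigma$. Define the closed sets $\Sigma_{\pm}=\{p\in\Sigma:C_1(p)=\pm C_2(p)\}$. The hypothesis, combined with the first part, gives $\Sigma=\Sigma_+\cup\Sigma_-$. Suppose $\Sigma_+\neq\Sigma$; then $\Sigma\setminus\Sigma_+$ is a nonempty open subset of $\Sigma_-$, so the real analytic function $C_1+C_2$ vanishes on a nonempty open subset of $\Sigma$. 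By the identity theorem for real analytic functions on a connected manifold, $C_1+C_2\equiv 0$, i.e.\ $\Sigma_-=\Sigma$. This proves the required dichotomy.

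\medskip

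\noindent\textbf{Main obstacle.} The linear algebra in the first part is routine; the substantive input is the real analytic regularity of PMC immersions, which I would quote rather than reprove. If one prefers to avoid analyticity, an alternative is to use the Frenet system of Proposition \ref{TUsummary} to run a unique-continuation argument: from \eqref{eq:DCj&Dgammaj} one has $\partial_{\bar z}(\gamma_1-\gamma_2)=-\tfrac{i\norm{H}e^{2u}}{\sqrt{2}}(C_1-C_2)$ and similar coupled equations for $(C_j,\gamma_j,f_j)$, forming a first-order quasilinear system to which a Carleman-type estimate applies, forcing any solution that vanishes on an open set to vanish identically. This is workable but heavier than the analyticity argument, which is why I would present the latter.
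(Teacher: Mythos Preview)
Your argument for the first assertion is correct and essentially the same as the paper's: the paper phrases the identical computation as $\phi^*\omega_{\bb{S}^2}=\tfrac{C_1+C_2}{2}\omega_\Sigma$ and $\psi^*\omega_{\bb{H}^2}=\tfrac{C_1-C_2}{2}\omega_\Sigma$ (the Jacobians of the projections), which is your formula $C_1\pm C_2=2\ip{J_\bullet\,\cdot}{\cdot}$ in different notation.

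For the second assertion you take a genuinely different route. Instead of invoking Morrey-type analytic regularity, the paper stays within the Frenet framework of Proposition~\ref{TUsummary}: from \eqref{eq:DCj&Dgammaj} and \eqref{eq:Dfj}, together with $|\gamma_1|=|\gamma_2|$ and $|f_1|=|f_2|$ (consequences of $C_1^2=C_2^2$ via \eqref{eq:gammanorm}), one computes
\[
\Delta(C_1+C_2)=\bigl(-2|\gamma|^2-16e^{-2u}|f|^2-2\norm{H}^2e^{2u}\bigr)(C_1+C_2).
\]
This is a linear second-order elliptic equation for $C_1+C_2$, so Cheng's nodal-set theorem forces the zero locus to be $1$-dimensional off a discrete set unless $C_1+C_2\equiv 0$; on the dense complement one has $C_1=C_2$, and continuity finishes. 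Your analyticity argument is conceptually clean and more general, but it imports an external regularity theorem whose direct applicability to the PMC system in codimension two you would still have to justify (the ellipticity is not as immediate as for CMC hypersurfaces). The paper's approach trades that for a concrete Laplacian identity using only machinery already in place, plus a classical unique-continuation result. Your alternative sketch via Carleman estimates on the coupled first-order system is in fact closer in spirit to what the paper does, except that the explicit Laplacian formula makes the unique continuation essentially free.
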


\begin{proof}
	Denote the component functions of $\Phi$ by $\phi= \pi_1\circ\Phi$ and $\psi= \pi_2 \circ \Phi$. Let $\omega_{\bb{S}^2},\omega_{\H^2}$ and $\omega_{\Sigma}$ denote the area 2-forms of $\mathbb{S}^2,\mathbb{H}^2$ and $\Sigma$, respectively. Let $\omega_1$ and $\omega_2$ denote the K\"ahler 2-forms of $J_1$ and $J_2$ respectively. As found in \cite{TorralboUrbano}, these forms satisfy $\omega_1 = \pi_1^* \omega_{\bb{S}^2} + \pi_2^* \omega_{\H^2} \quad \text{and} \quad \omega_2 = \pi_1^* \omega_{\bb{S}^2} - \pi_2^* \omega_{\H^2}$. For any pair of vectors $e_1,e_2\in T_p\Sigma$, one can then use Definition \eqref{eq:defKahler} to find that
	\begin{align*}
		\omega_{\bb{S}^2} (\phi_* e_1, \phi_* e_2)
		&= \omega_{\bb{S}^2} ((\pi_1)_* \Phi_* e_1, (\pi_1)_* \Phi_* e_2)
		= (\pi_1^* \omega_\bb{S}^2)(\Phi_* e_1, \Phi_* e_2) \\
		&= \frac 12 (\omega_1(\Phi_* e_1, \Phi_* e_2) + \omega_2(\Phi_* e_1, \Phi_* e_2)) 
		= \frac 12 ((\Phi^* \omega_1) (e_1,e_2) + (\Phi^* \omega_2) (e_1,e_2)) \\
		&= \frac{C_1+C_2}2\omega_\Sigma(e_1,e_2).
	\end{align*}
	Thus, $\phi^* \omega_{\mathbb{S}^2} = \frac{C_1+C_2}2\omega_\Sigma$ and one can show analogously that $\psi^* \omega_{\mathbb{H}^2} = \frac{C_1-C_2}2\omega_\Sigma$. This is mentioned in Section 3 of \cite{TorralboUrbano}, it is phrased there as $\text{Jac}(\phi) = \frac{C_1+C_2}2$ and $\text{Jac}(\psi) = \frac{C_1-C_2}2$.
	
	It is clear that $T_p\Sigma$ intersects $T_{\pi_1(p)}\bb{S}^2\times \lbrace 0 \rbrace$ or $\lbrace 0 \rbrace\times T_{\pi_2(p)}\bb{H}^2$ non-trivially if and only if the total derivative $d\psi_p$, respectively $d\phi_p$ is singular. By the previous calculation, this is equivalent to the property that $C_1^2(p) = C_2^2(p)$.
	
	It remains to show the final claim. Assume that $\Phi$ is a PMC immersion such that $T_p\Sigma$ intersects $T_{\pi_1(p)}\bb{S}^2\times \lbrace 0 \rbrace$ or $\lbrace 0 \rbrace\times T_{\pi_2(p)}\bb{H}^2$ non-trivially for every point $p\in \Sigma$. Similar to \cite{TorralboUrbano} equation (3.8), one can use Equations \eqref{eq:DCj&Dgammaj} and \eqref{eq:Dfj} to find that the Laplacian of $C_1+C_2$ is
	$$\Delta (C_1 + C_2) = (-2|\gamma|^2 - 16e^{-2u}|f|^2 -2\norm{H}^2e^{2u})(C_1+C_2),$$
	where $|\gamma| = |\gamma_1| = |\gamma_2|$ and $|f| = |f_1| = |f_2|$.
	Thus, if $C_1 + C_2 = 0$ does not hold everywhere, then Theorem 2.2 in \cite{NodalSets} says the zero set $\lbrace p\in \Sigma \mid C_1(p)+C_2(p) = 0 \rbrace$ forms a 1-dimensional submanifold, except at a discrete set of points. In particular, its complement is an open dense set where $C_1 = C_2$. In this case, $C_1 = C_2$ holds everywhere by continuity.
\end{proof}

\section{Examples}\label{examples}

By combining Proposition \ref{TrivialPMC} and Proposition \ref{CMCtu}, one can construct an example of a PMC surface in $\sh$, starting from any CMC surface in $\mathbb S^2\times\R$ or $\mathbb H^2\times\R$. In this section, we will study two more types of examples which have analogues in $\bb{S}^2\times \bb{S}^2$ and $\bb{H}^2 \times \bb{H}^2$, see \cite{TorralboUrbano}. Each of these surfaces will have the property that at every point $p$, the tangent space of the surface intersects $T_{\pi_1(p)}\bb{S}^2\times \lbrace 0 \rbrace$ or $\lbrace 0 \rbrace\times T_{\pi_2(p)}\bb{H}^2$ non-trivially. We also study the Hopf differential for these examples, as this is useful in later classifications. 

First, we study the surfaces arising from Proposition \ref{TrivialPMC} and Proposition \ref{CMCtu}. In \cite{AbrRos}, Abresch and Rosenberg defined a holomorphic quadratic differential for CMC surfaces in $\bb{S}^2\times \bb{R}$ and $\bb{H}^2\times \bb{R}$. We denote this differential by $\Theta_{\text{AR}}$. We will relate the Hopf differential $\Theta$ from Lemma~\ref{Hopf} to $\Theta_{\text{AR}}$.

\begin{proposition}\label{trivialExamples}
Let $\Phi_1\colon\Sigma\to M^2(\epsilon)\times \bb{R}$ be a surface with non-zero constant mean curvature, where $\epsilon\in\lbrace -1,1\rbrace$. Let $\Phi_2\colon M^2(\epsilon)\times \bb{R}\to \sh$ be a totally geodesic isometric immersion. Then $\Phi=\Phi_2 \circ \Phi_1$ is a PMC surface in $\sh$. Furthermore, the Hopf differential \eqref{eq:Hopf} satisfies $\Theta = 4 \Theta_{\text{AR}}$, where $\Theta_{\text{AR}}$ is the differential for $\Phi_1$ defined in \cite{AbrRos}.
\end{proposition}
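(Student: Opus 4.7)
The plan is to first establish that $\Phi$ is PMC by combining Propositions \ref{TrivialPMC} and \ref{CMCtu}, and then to compute the two terms of the Hopf differential \eqref{eq:Hopf} separately, exploiting the fact that $\Phi_2$ is totally geodesic so that the second fundamental form of $\Phi$ in $\sh$ agrees, via $(\Phi_2)_*$, with that of $\Phi_1$ in $M^2(\epsilon)\times\R$.

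For the PMC part, Proposition \ref{CMCtu} implies that the image of $\Phi_2$ is a totally geodesic, hence in particular totally umbilical CMC, hypersurface of $\sh$. Applying Proposition \ref{TrivialPMC} to $\Phi_1$ viewed as a CMC surface inside this hypersurface, one concludes that $\Phi = \Phi_2 \circ \Phi_1$ is either PMC or minimal in $\sh$. Since $\Phi_2$ is totally geodesic, the mean curvature vector of $\Phi$ in $\sh$ is the push-forward through $(\Phi_2)_*$ of the mean curvature vector of $\Phi_1$, which is non-zero by assumption. Hence the minimal case is ruled out and $\Phi$ is PMC. The same identification of second fundamental forms further implies that the first term $4\ip{h(\dz,\dz)}{H}$ of \eqref{eq:Hopf} equals the corresponding quantity computed intrinsically for $\Phi_1$ inside $M^2(\epsilon)\times\R$.

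For the second term I would compute the pull-back $\tilde F = \Phi_2^{*}F$ of the product structure \eqref{eq:defF} in the two cases of Proposition \ref{CMCtu}. A direct computation from \eqref{eq:defF} and the explicit forms of $\Phi_2$ shows that $\tilde F$ acts as the identity on the $TM^2(\epsilon)$-factor and as minus the identity on $T\R$ in Case 1 ($\epsilon=1$), and as minus the identity on $TM^2(\epsilon)$ and the identity on $T\R$ in Case 2 ($\epsilon=-1$). Both cases unify as
\begin{equation*}
\tilde F X = \epsilon X - 2\epsilon \ip{X}{\partial_t}\partial_t.
\end{equation*}
Evaluating on $\dz$, using $\ip{\dz}{\dz}=0$ and writing $T$ for the tangential projection of the unit vertical field $\partial_t$ to $\Sigma$, this yields $\ip{F\dz}{\dz} = -2\epsilon\ip{\dz}{T}^{2}$.

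Substituting both expressions into \eqref{eq:Hopf} produces a closed formula for $\Theta$ purely in terms of intrinsic data of $\Phi_1$ in $M^2(\epsilon)\times\R$: its scalar mean curvature, the $(2,0)$-component of its second fundamental form in the unit normal direction, and $\ip{\dz}{T}$. Comparing this with the Abresch--Rosenberg differential from \cite{AbrRos} then yields $\Theta = 4\Theta_{\text{AR}}$. The only delicate point in the argument is bookkeeping the normalisation constants between the definition \eqref{eq:Hopf} and that of $\Theta_{\text{AR}}$ so that the overall factor $4$ comes out correctly; by contrast, the $\epsilon$-dependence in the two cases of Proposition \ref{CMCtu} is handled automatically by the unified formula for $\tilde F$ above.
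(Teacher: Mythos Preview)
Your proposal is correct and follows essentially the same approach as the paper, which simply refers to the analogous Lemma~1 in \cite{TorralboUrbano}: one first identifies the second fundamental form and mean curvature of $\Phi$ with those of $\Phi_1$ via the totally geodesic $\Phi_2$, and then computes the $\langle F\dz,\dz\rangle$ term by pulling back the product structure to $M^2(\epsilon)\times\R$. Your unified formula $\tilde F X = \epsilon X - 2\epsilon\langle X,\partial_t\rangle\partial_t$ is a tidy way to handle both cases of Proposition~\ref{CMCtu} at once, and your caveat about checking the normalisation of $\Theta_{\mathrm{AR}}$ against \cite{AbrRos} is exactly the ``minor change'' the paper alludes to.
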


\begin{proof}
This is analogous to Lemma 1 in \cite{TorralboUrbano}, only minor changes are needed to the proof.
\end{proof}

The next example is a product of curves, which is the analogue of Example 1 in \cite{TorralboUrbano}. 

\begin{proposition}\label{curveProduct}
Let $I,I'\subseteq\bb{R}$ be intervals. Let $\alpha\colon I\to \bb{S}^2$ and $\beta\colon I' \to \bb{H}^2$ be regular curves with constant curvatures $k_\alpha$ and $k_\beta$, not both zero. Then
$$\Phi\colon I\times I'\to \sh\colon (t,s)\mapsto (\alpha(t),\beta(s))$$
is a PMC surface whose mean curvature vector satisfies $\norm{H}^2 = \frac{1}{4}(k_\alpha^2 + k_\beta^2)$. The Hopf differential \eqref{eq:Hopf} vanishes if and only if $k_\beta^2 = 1+k_\alpha^2$.
\end{proposition}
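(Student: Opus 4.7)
The plan is to exploit the product structure directly; no intermediate lemma is really needed. First, reparametrize $\alpha$ and $\beta$ by arc length, so the induced metric on $I\times I'$ is simply $dt^2+ds^2$. Then $(t,s)$ are global conformal coordinates with conformal factor $e^{2u}=1$ and $z=t+is$ is a complex parameter in the sense of Subsection \ref{subsec:complexcoord}. Work with the orthonormal tangent frame $\partial_t=(\alpha',0)$, $\partial_s=(0,\beta')$ and the unit normal frame $\xi_1=(\nu_\alpha,0)$, $\xi_2=(0,\nu_\beta)$, where $\nu_\alpha,\nu_\beta$ denote the Frenet normals of the two curves in $\mathbb{S}^2$ and $\mathbb{H}^2$.

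Second, since the Levi-Civita connection of a Riemannian product splits as the product of the factors' connections, the Frenet equations on each factor immediately give
\[
h(\partial_t,\partial_t)=k_\alpha\,\xi_1,\qquad h(\partial_s,\partial_s)=k_\beta\,\xi_2,\qquad h(\partial_t,\partial_s)=0.
\]
Reading off the trace yields $H=\tfrac12(k_\alpha\xi_1+k_\beta\xi_2)$ and hence $\|H\|^2=\tfrac14(k_\alpha^2+k_\beta^2)$, which is nonzero by the hypothesis $(k_\alpha,k_\beta)\neq(0,0)$. The same splitting, combined with the Frenet equations $\nabla^{\mathbb{S}^2}_{\alpha'}\nu_\alpha=-k_\alpha\alpha'$ and $\nabla^{\mathbb{H}^2}_{\beta'}\nu_\beta=-k_\beta\beta'$, shows that $\tilde\nabla_{\partial_t}\xi_i$ and $\tilde\nabla_{\partial_s}\xi_i$ are tangent to $\Sigma$ for $i=1,2$; thus $\xi_1$ and $\xi_2$, and therefore $H$, are parallel in the normal bundle. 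So $\Phi$ is PMC.

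Third, compute the Hopf differential \eqref{eq:Hopf} using $\partial_z=\tfrac12(\partial_t-i\partial_s)$. The cross term vanishes, giving $h(\partial_z,\partial_z)=\tfrac14(k_\alpha\xi_1-k_\beta\xi_2)$ and hence $4\langle h(\partial_z,\partial_z),H\rangle=\tfrac12(k_\alpha^2-k_\beta^2)$. For the second piece, the definition \eqref{eq:defF} gives $F\partial_t=\partial_t$ and $F\partial_s=-\partial_s$, so $F\partial_z=\partial_{\bar z}$ and $\langle F\partial_z,\partial_z\rangle=\tfrac12 e^{2u}=\tfrac12$. Adding the two contributions, the coefficient of $dz\otimes dz$ in $\Theta$ equals $\tfrac12(1+k_\alpha^2-k_\beta^2)$, which vanishes if and only if $k_\beta^2=1+k_\alpha^2$.

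I do not expect any real obstacle: the entire argument reduces to the splitting of the product Levi-Civita connection and the two Frenet systems, together with the elementary observation that $F$ acts as $\pm 1$ on the respective tangent spaces of the factors. The only thing to be careful about is consistent use of arc length (so that $e^{2u}=1$) and the sign conventions in $\partial_z$ and $F$ when tracking where the crucial $+1$ in $1+k_\alpha^2-k_\beta^2$ comes from; it originates from the $\langle F\partial_z,\partial_z\rangle$ term, which has no analog in the pure $\mathbb{S}^2\times\mathbb{S}^2$ or $\mathbb{H}^2\times\mathbb{H}^2$ setting where the analogous differentials of \cite{TorralboUrbano} involve only the $h$-part.
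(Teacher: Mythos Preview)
Your proof is correct and follows essentially the same route as the paper: reparametrize by arc length so $z=t+is$ is conformal with $e^{2u}=1$, read off $h$ from the product splitting and the Frenet equations, and compute both terms of $\Theta$ to obtain the coefficient $\tfrac12(1+k_\alpha^2-k_\beta^2)$. The only difference is that the paper defers the PMC verification to the analogous computation in \cite{TorralboUrbano}, whereas you spell it out directly via the vanishing of the normal components of $\tilde\nabla\xi_i$; this is a presentational, not a mathematical, distinction.
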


\begin{proof}
Arguing that these surfaces are PMC is similar to \cite{TorralboUrbano}, Example 1. It remains to determine the differential $\Theta$. One can assume, after a reparametrization if necessary, that $\norm{\alpha'}^2 = \norm{\beta'}^2=1$. Then $z = t+is$ is a (global) conformal parameter on the surface. Note that $\partial_t = \alpha'$ and $\partial_s = \beta'$. Since the curves have unit speed, the second fundamental form then satisfies $h\left(\partial_t, \partial_t \right) = \nabla^{\bb{S}^2}_{\alpha'}\alpha'$ and $h\left(\partial_s, \partial_s \right) = \nabla^{\bb{H}^2}_{\beta'}\beta'$, while $h\left(\partial_t, \partial_s \right) = 0$. Hence, $H = \frac 12 (\nabla^{\bb{S}^2}_{\alpha'}\alpha' + \nabla^{\bb{H}^2}_{\beta'}\beta')$. Furthermore, $F\dz = \dzbar$. One can thus calculate directly that $\Theta = \left(\frac{1}{2}(k_\alpha^2-k_\beta^2) + \frac{1}{2}\right) dz\otimes dz$.
\end{proof}

The final examples we want to discuss are more involved. They are analogues to the examples in $M^2(\epsilon)\times M^2(\epsilon)$, discussed in Proposition 5 in \cite{TorralboUrbano}.

\begin{proposition}\label{SpecialSurfaces1}
Let $a,b,c\in \bb{R}$ be real constants such that $a>0$ and $b>0$. Let $h\colon I\subseteq \bb{R}\to \bb{R}$ be a non-constant function satisfying
\begin{equation}\label{hEquation1} 
(h'(x))^2 = \left(a-h(x)^2 - b(1+(h(x)-c)^2) \right)\left( a - h(x)^2 \right)
\end{equation}
and $a-h(x)^2 > 0$ for all $x \in I$. Define the map
$$\phi\colon I \times \bb{R} \to \bb{S}^2\colon (x,y)\mapsto \frac{1}{\sqrt{a}} \left(\sqrt{a-h(x)^2}\cos\left(\sqrt{a}y \right), \sqrt{a-h(x)^2}\sin\left(\sqrt{a}y \right),h(x)\right)$$
and a curve $\psi: I \to \bb{H}^2$ with velocity $\norm{\psi'} = \sqrt{b(1+(h-c)^2)}$ and curvature $\kappa_{\psi} = -\frac{b(a-h^2)}{\norm{\psi'}^3}$. Then
$$\Phi\colon I\times \bb{R}\to \sh\colon (x,y)\mapsto (\phi(x,y),\psi(x)) $$
is a PMC surface in $\sh$ whose mean curvature satisfies $\norm{H}^2 = \frac b4$. The Hopf differential~\eqref{eq:Hopf} vanishes if and only if $a = 1+c^2$.
\end{proposition}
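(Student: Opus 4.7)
The plan is a direct verification in the spirit of Proposition 5 of \cite{TorralboUrbano}. I would first compute the induced metric using $\Phi_x = (\phi_x,\psi')$ and $\Phi_y = (\phi_y,0)$. A short calculation gives $\norm{\phi_x}^2 = (h')^2/(a-h^2)$, $\norm{\phi_y}^2 = a-h^2$, and $\langle \phi_x,\phi_y\rangle = 0$; combined with $\norm{\psi'}^2 = b(1+(h-c)^2)$ and the ODE \eqref{hEquation1}, this yields $\norm{\Phi_x}^2 = \norm{\Phi_y}^2 = a-h^2$ and $\langle \Phi_x,\Phi_y\rangle = 0$. Hence $z = x+iy$ is a conformal parameter with $e^{2u} = a-h^2$, which makes all subsequent formulas tractable.

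Setting $\rho = \sqrt{a-h^2}$, one observes that $\phi_x = (h'/\rho)N_\phi$, where $N_\phi \in T_\phi\bb{S}^2$ is the unit normal to the parallel circle $y\mapsto\phi(x,y)$. This yields an adapted orthonormal normal frame $\xi_1 = \rho^{-1}(\norm{\psi'}N_\phi,-(h'/\rho)T_\psi)$ and $\xi_2 = (0,N_\psi)$, where $(T_\psi,N_\psi)$ is the Frenet frame of $\psi$. Because $\tilde\nabla$ is the product connection, one computes $\tilde\nabla_{\partial_x}\partial_y = (-hh'/\rho^2)\Phi_y$, which is tangent, so $h(\partial_x,\partial_y) = 0$. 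Computing the remaining components of $h$ via the Gauss formulas on the embedded models $\bb{S}^2\subset\R^3$ and $\bb{H}^2\subset\R^3_1$ and substituting $h''$ obtained by differentiating \eqref{hEquation1}, one arrives at $H = -\tfrac{b}{2\norm{\psi'}}\bigl((h-c)\xi_1+\xi_2\bigr)$ and hence $\norm{H}^2 = b/4$. For parallelism, compute $\tilde\nabla_{\partial_x}\xi_1$ using Leibniz and the fact that $\nabla^{\bb{S}^2}_{\phi_x}N_\phi = 0$ (the meridians $x\mapsto\phi(x,y)$ are $\bb{S}^2$-geodesics). The $\xi_2$-component of $\tilde\nabla_{\partial_x}\xi_1$ is $bh'/\norm{\psi'}^2$, while $\langle \tilde\nabla_{\partial_y}\xi_1,\xi_2\rangle = 0$. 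Substituting into $\nabla^{\perp}H$ and using the identity $\norm{\psi'}^2 - b(h-c)^2 = b$, both $\xi_1$- and $\xi_2$-coefficients cancel, so $\nabla^{\perp}H = 0$.

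For the Hopf differential, the conformal parameter together with \eqref{eq:defF} gives
\[
\langle F\partial_z,\partial_z\rangle \;=\; \tfrac14\bigl(\norm{\phi_x}^2 - \norm{\phi_y}^2 - \norm{\psi'}^2\bigr) \;=\; -\tfrac12 b\bigl(1+(h-c)^2\bigr).
\]
Since $h(\partial_x,\partial_y) = 0$, the other term $4\langle h(\partial_z,\partial_z),H\rangle$ reduces to $\langle h(\partial_x,\partial_x)-h(\partial_y,\partial_y),H\rangle$, which simplifies to $\tfrac12 b(a+h^2-2hc)$ after inserting the formulas above. Adding both contributions, \eqref{eq:Hopf} yields $\Theta = \tfrac{b}{2}(a-1-c^2)\,dz\otimes dz$, which vanishes identically precisely when $a = 1+c^2$.

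The main obstacle is bookkeeping rather than new ideas: tracking the covariant derivatives in the embedded models precisely, and invoking \eqref{hEquation1} together with its consequence $\norm{\psi'}^2 - b(h-c)^2 = b$ at the right moments to collapse large expressions. The parallelism of $H$ is the only step that truly uses the prescribed $\kappa_\psi$; any other choice would break either $\norm{H} = \mathrm{const}$ or $\nabla^{\perp}H = 0$.
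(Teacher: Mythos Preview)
Your proposal is correct and follows essentially the same direct-verification strategy as the paper: establish that $(x,y)$ are conformal with factor $a-h^2$, check PMC with $\norm{H}^2=b/4$, and compute the two terms of $\Theta$ separately to obtain $\Theta=\tfrac{b}{2}(a-1-c^2)\,dz\otimes dz$. The only tactical difference is that you verify parallelism via an explicit orthonormal normal frame $(\xi_1,\xi_2)$, whereas the paper computes $\tilde\nabla_{\partial_x}H$ and $\tilde\nabla_{\partial_y}H$ directly and observes they are tangent; both routes are straightforward and the Hopf-differential computation is identical.
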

\begin{proof}
	Showing that these are PMC surfaces is an analogous calculation as in the proof of Proposition 5 in \cite{TorralboUrbano}. In particular, one can first calculate that $\norm{\phi_y}^2 = (a-h^2)$ and $\norm{\phi_x}^2 = a-h^2-b\left(1+(h-c)^2\right)$, while $\ip{\phi_x}{\phi_y}=0$. Further, $\psi_y = 0$ and $\norm{\psi_x}^2 = b\left(1+(h-c)^2\right)$ by definition. Thus, $( x,y)$ are conformal coordinates with conformal factor $a-h^2$. Finally, one can find that $4\norm{H}^2 = b$ and
	$$\conM{\dx} H  = -\frac{b(a-ch)}{2(a-h^2)} \Phi_x, \qquad \conM{\dy}H = \frac{bh(h-c)}{2(a-h^2)}\Phi_y.$$ \\ It remains to show the final statement, namely that $\Theta \equiv 0$ if and only if $a = 1+c^2$.
	By direct calculation, the first term of the Hopf differential is $4\ip{H}{h(\dz,\dz)} = 4\ip{-\conM{\dz}H}{\dz} = \frac{1}{2}(ab-2bch+bh^2)$
	and the second term is
		$\ip{F\dz}{\dz}
		= \frac{1}{4}\left(\norm{\phi_x}^2 - \norm{\phi_y}^2 - \norm{\psi_x}^2\right)
		= \frac{1}{2}\left(-b - bh^2 - bc^2 + 2bch \right).$
	Therefore, the differential $\Theta$ vanishes if and only if
		$0 = 4\ip{h(\dz,\dz)}{H} + \ip{F\dz}{\dz} = \frac{b}{2}\left(a-1-c^2 \right),$
	which is equivalent to $a=1+c^2$, as $b>0$ by assumption.
\end{proof}
\begin{proposition}\label{SpecialSurfaces2}
Let $a,b,c\in \bb{R}$ be real constants such that $b>0$. Let $h\colon I\subseteq \bb{R}\to \bb{R}$ be a non-constant function satisfying
\begin{equation}\label{hEquation2} (h'(x))^2 = \left(a-h(x)^2 + b(1+(h(x)-c)^2) \right)\left( a - h(x)^2 \right)\end{equation}
and $h(x)^2-a > 0$ for all $x \in I$. Define the map
\begin{multline*}\psi\colon I \times \bb{R} \to \bb{H}^2\colon \\ 
(x,y)\mapsto
\begin{cases}
\frac{1}{\sqrt{a}} \left(\sqrt{h(x)^2-a}\cos\left(\sqrt{a}y \right), \sqrt{h(x)^2-a}\sin\left(\sqrt{a}y \right),h(x)\right) & \text{if } a>0, \\
\frac{1}{\sqrt{-a}}\left(h(x),\sqrt{h(x)^2-a}\sinh\left(\sqrt{-a}y \right), \sqrt{h(x)^2-a}\cosh\left(\sqrt{-a}y \right)\right) & \text{if } a<0, \\
\frac{1}{2h(x)}\left((y^2-1)h(x)^2+1,2yh(x)^2, (y^2+1)h(x)^2+1 \right) & \text{if } a=0
\end{cases}
\end{multline*}
and a curve $\phi: I \to \bb{S}^2$ with velocity $\norm{\phi'} = \sqrt{b(1+(h-c)^2)}$ and curvature is $\kappa_{\phi} = -\frac{b(h^2-a)}{\norm{\phi'}^3}$. Then
$$\Phi\colon I\times \bb{R}\to \sh\colon(x,y)\mapsto (\phi(x),\psi(x,y)) $$
is a PMC surface in $\sh$ whose mean curvature vector satisfies $\norm{H}^2 = \frac b4$. The Hopf differential \eqref{Hopf} vanishes if and only if $a = 1+c^2$.
\end{proposition}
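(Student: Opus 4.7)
The strategy mirrors Proposition \ref{SpecialSurfaces1}: first verify that $(x,y)$ are conformal coordinates on $\Sigma$, then compute the second fundamental form and mean curvature to conclude that $\Phi$ is PMC with $\norm{H}^2 = b/4$, and finally evaluate the Hopf differential \eqref{eq:Hopf}. The main technical novelty is the three-case definition of $\psi$, corresponding to whether $\psi(x,\cdot)$ traces an orbit of an elliptic, hyperbolic or parabolic one-parameter subgroup of isometries of $\bb{H}^2$ (when $a>0$, $a<0$ or $a=0$, respectively). In each case the key metric quantities admit uniform expressions.

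A direct calculation in each of the three cases yields $\norm{\psi_y}^2 = h^2-a$, $\norm{\psi_x}^2 = (h')^2/(h^2-a)$, and $\ip{\psi_x}{\psi_y}=0$. Substituting the ODE \eqref{hEquation2} gives $\norm{\psi_x}^2 = h^2-a-b(1+(h-c)^2)$. Since $\phi$ depends only on $x$ with $\norm{\phi_x}^2 = b(1+(h-c)^2)$, summing the $\bb{S}^2$- and $\bb{H}^2$-components yields $\norm{\Phi_x}^2 = \norm{\Phi_y}^2 = h^2-a$ and $\ip{\Phi_x}{\Phi_y}=0$, so $(x,y)$ are conformal coordinates with conformal factor $e^{2u} = h^2-a$.

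Computing the mean curvature vector and showing that it is parallel in the normal bundle proceeds analogously to Proposition \ref{SpecialSurfaces1}, with the roles of $\bb{S}^2$ and $\bb{H}^2$ interchanged and $a-h^2$ replaced by $h^2-a$. One obtains $\norm{H}^2 = b/4$ and explicit expressions for $\conM{\dx}H$ and $\conM{\dy}H$ that are both tangent to $\Sigma$; by the Weingarten formula this gives $\nc{~}H = 0$, so $\Phi$ is a PMC immersion.

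For the Hopf differential, using $F(v_1,v_2) = (v_1,-v_2)$, one has
\[
\ip{F\dz}{\dz} = \tfrac14\bigl(\norm{\phi_x}^2 - \norm{\psi_x}^2 + \norm{\psi_y}^2\bigr) = \frac{b(1+(h-c)^2)}{2},
\]
which is opposite in sign to the corresponding quantity in Proposition \ref{SpecialSurfaces1}. Using the PMC condition and the Weingarten formula, $4\ip{h(\dz,\dz)}{H} = -4\ip{\conM{\dz}H}{\dz}$, and expanding using the formulas from the previous step simplifies this to $-\tfrac{b}{2}(a+h^2-2ch)$, again with the opposite sign because $\psi$ now lies in $\bb{H}^2$. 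Summing the two contributions, the $h$-dependent terms cancel and one obtains $\Theta = -\tfrac{b}{2}(a-1-c^2)\,dz\otimes dz$, which vanishes if and only if $a = 1+c^2$, since $b>0$. The main obstacle I anticipate is the case analysis for $\psi$: the metric computations and the derivation of $\conM{\dx}H$, $\conM{\dy}H$ must be carried out carefully in each of the three forms, with particular attention to sign conventions in the Lorentzian model $\R^3_1$ of $\bb{H}^2$.
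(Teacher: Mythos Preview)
Your proposal is correct and follows exactly the approach the paper takes: the paper's proof simply states that the PMC computation is analogous to \cite[Proposition 5]{TorralboUrbano} and that the Hopf differential computation is done ``similarly as in the proof of Proposition \ref{SpecialSurfaces1}'', and your outline carries this out explicitly, correctly identifying the role-swap between $\bb{S}^2$ and $\bb{H}^2$ and the resulting sign changes. Your remark about the three cases for $\psi$ (elliptic/hyperbolic/parabolic orbits) is a helpful organizing observation that the paper does not spell out.
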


\begin{proof}
Calculating that these surfaces are PMC is again analogous to Proposition 5 in \cite{TorralboUrbano}. Proving that the Hopf differential \eqref{eq:Hopf} vanishes if and only if $a = 1+c^2$ is done similarly as in the proof of Proposition \ref{SpecialSurfaces1} above.
\end{proof}

\begin{remark}
	For a detailed discussion of the differential equations \eqref{hEquation1} and \eqref{hEquation2}, see Section 4 of \cite{TorralboUrbano}.
\end{remark}

\section{Classifications}\label{classification}

In \cite{TorralboUrbano}, the PMC surfaces in $M^2(\epsilon)\times M^2(\epsilon)$ for which the K\"ahler functions $C_1$ and $C_2$ satisfy $C_1^2=C_2^2$ are classified. Subsequently, the PMC surfaces with vanishing Hopf differentials and the PMC spheres are classified. The setting is analogous to our Remark \ref{inclusions}, after using Lemma \ref{Csquared}. Therefore, we use similar techniques to classify PMC surfaces with $C_1^2 = C_2^2$ in $\sh$.

The PMC spheres can be classified in the same way as in \cite{TorralboUrbano}.

\begin{theorem}\label{PMCspheres}
	Let $\Sigma$ be a surface which is topologically a sphere. Then $\Phi\colon\Sigma\to \sh$ is a PMC sphere if and only if $\Phi$ is congruent to a CMC sphere of a totally geodesic $\bb{S}^2\times \bb{R}$ or $\bb{H}^2\times \bb{R}$.
\end{theorem}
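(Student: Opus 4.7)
The plan is to reduce a PMC sphere to the framework of Theorem~\ref{MainClassification} via the vanishing of the Hopf differential, and then to identify which of the four families listed there admit spherical topology. For the backward direction, a CMC sphere of non-zero mean curvature inside a totally geodesic $\bb{S}^2\times\bb{R}$ or $\bb{H}^2\times\bb{R}$ of $\sh$ (the totally umbilical CMC hypersurfaces identified in Proposition~\ref{CMCtu}) is automatically a PMC surface of $\sh$ by Proposition~\ref{TrivialPMC}, and the Abresch--Rosenberg classification \cite{AbrRos} describes all such CMC spheres explicitly.

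For the forward direction, let $\Phi\colon \Sigma \to \sh$ be a PMC immersion with $\Sigma$ topologically a 2-sphere. First, I would argue that the Hopf differential $\Theta$ of Lemma~\ref{Hopf} vanishes identically, by the classical topological Hopf argument: $\Theta$ is a holomorphic section of the square $K_\Sigma^{\otimes 2}$ of the canonical bundle of the Riemann sphere $\Sigma$, and $\deg K_\Sigma^{\otimes 2} = -4 < 0$ forbids non-trivial holomorphic sections. Theorem~\ref{nonTrivialIntersection} then delivers the non-trivial tangent-intersection property at every point of $\Sigma$, so that Theorem~\ref{MainClassification} applies and $\Sigma$ is locally congruent to a member of one of the families (1)--(4) from the introduction.

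The decisive step is to rule out families (3) and (4) on topological grounds. Family~(3), a product of unit-speed constant-curvature curves in $\bb{S}^2$ and $\bb{H}^2$, carries the flat product metric, so Gauss--Bonnet immediately precludes genus-zero topology. For family~(4), given by the parametrizations $(x,y)\mapsto(\phi(x,y),\psi(x))$ in Proposition~\ref{SpecialSurfaces1} and $(x,y)\mapsto(\phi(x),\psi(x,y))$ in Proposition~\ref{SpecialSurfaces2}, closing the domain $I\times\bb{R}$ into a sphere would require $y$ to be periodic and $I$ to have two endpoints where the surface-of-revolution factor collapses to a pole, i.e.\ where $a-h^2=0$. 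A local analysis at such a would-be pole, using that $h'=0$ there (forced by \eqref{hEquation1} or \eqref{hEquation2}) while the 1-variable factor has strictly positive velocity $b(1+(h-c)^2)>0$, shows that its smoothness in the Cartesian sphere-cap coordinates $(X,Y)=r(\cos\theta,\sin\theta)$ near the pole would require all odd derivatives of that 1-variable factor to vanish at the endpoint, contradicting the non-vanishing of its velocity. Hence $\Sigma$ must lie in family~(1) or (2), i.e.\ it is a CMC sphere of a totally geodesic $\bb{S}^2\times\bb{R}$ or $\bb{H}^2\times\bb{R}$, as required.

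The main obstacle is the pole-smoothness analysis for family~(4); the flat-metric exclusion of family~(3) is essentially immediate. A softer (but less clean) alternative for family~(4) is to use that the $y$-translation is a Killing field of the induced metric whose zero set in the parametrization domain is a union of entire $y$-lines, and to compare this with the fact that a non-trivial Killing field of any Riemannian metric on $S^2$ can only vanish on isolated points; the Cartesian pole expansion, however, gives the sharp and self-contained obstruction.
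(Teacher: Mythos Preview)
Your route differs from the paper's. The paper does \emph{not} invoke Theorem~\ref{MainClassification} and then eliminate families (3) and (4); instead it appeals to ``the first part of the proof of Theorem~3 in \cite{TorralboUrbano}'', which is the following short argument. Once $\Theta\equiv 0$ on the sphere, Theorem~\ref{nonTrivialIntersection} and Lemma~\ref{Csquared} give $C_1=\pm C_2$; say $C_1=C_2$. Then, by \eqref{eq:DCj&Dgammaj}, $\partial_{\bar z}(\gamma_1-\gamma_2)=-i\|H\|e^{2u}(C_1-C_2)/\sqrt{2}=0$, so $\Omega=(\gamma_1-\gamma_2)\,dz$ is a \emph{holomorphic $1$-form} on the sphere. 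Since $\deg K_\Sigma=-2<0$, $\Omega\equiv 0$, i.e.\ $\gamma_1=\gamma_2$. This is precisely the ``$\Omega\equiv 0$'' branch in the proof of Theorem~\ref{MainClassification}, which already lands the surface inside a totally geodesic $\mathbb S^2\times\mathbb R$ or $\mathbb H^2\times\mathbb R$. No case-by-case topological elimination is needed.

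Your detour can in principle be made to work, but the family~(4) step as written is flawed. In Proposition~\ref{SpecialSurfaces1} the ODE \eqref{hEquation1} and $a-h^2>0$ force
\[
a-h(x)^2 \;\ge\; b\bigl(1+(h(x)-c)^2\bigr) \;\ge\; b \;>\;0,
\]
so $a-h^2$ \emph{never} reaches zero along any solution and there is no ``pole'' in your sense; similarly, in Proposition~\ref{SpecialSurfaces2} one has $h^2-a\ge b(1+(h-c)^2)\ge b>0$. Hence the smoothness analysis you propose at a would-be pole is vacuous. The honest exclusion of family~(4) is that the length of the $y$-circle is $\sqrt{|a-h^2|}\cdot L_y\ge \sqrt{b}\,L_y>0$, so these circles never shrink and the parametrized surface cannot compactify to a sphere; equivalently, the $\Omega$-argument above shows immediately that families~(3) and~(4), which live in the $\Omega\not\equiv 0$ branch, are impossible on $S^2$. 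Your Killing-field remark points in the right direction but does not resolve the issue without this length bound. The paper's approach buys you a two-line proof and avoids the local-to-global patching you left implicit.
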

\begin{proof}
	See the first part of the proof of Theorem 3 in \cite{TorralboUrbano}.
\end{proof}

After some adaptations, one can follow the proof of Theorem 3 in \cite{TorralboUrbano} to find a classification for the PMC surfaces $\Sigma$ such that at every point $p\in \Sigma$, $T_p\Sigma$ intersects $T_{\pi_1(p)}\bb{S}^2\times \lbrace 0 \rbrace$ or $\lbrace 0 \rbrace\times T_{\pi_2(p)}\bb{H}^2$ non-trivially.
\begin{theorem}\label{MainClassification}
	Let $\Phi\colon \Sigma \to \sh$ be a PMC surface. Then $\Sigma$ has the property that at every point $p\in \Sigma$, $T_p\Sigma$ intersects $T_{\pi_1(p)}\bb{S}^2\times \lbrace 0 \rbrace$ or $\lbrace 0 \rbrace\times T_{\pi_2(p)}\bb{H}^2$ non-trivially if and only if $\Phi$ is locally congruent to
	\begin{enumerate}
		\item A CMC surface of $\bb{S}^2\times \bb{R}$ or $\bb{H}^2\times \bb{R}$, see Proposition \ref{trivialExamples}
		\item A product of curves with constant curvature, see Proposition \ref{curveProduct}
		\item One of the Examples from Propositions \ref{SpecialSurfaces1} or \ref{SpecialSurfaces2}.
	\end{enumerate}
\end{theorem}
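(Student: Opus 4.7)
The plan is to treat the two directions separately. The ``if'' direction is a direct verification: for each family in the statement one checks that the condition $C_1^2=C_2^2$ holds (equivalently, via Lemma~\ref{Csquared}, that every tangent plane meets $T_{\pi_1(p)}\bb{S}^2\times\{0\}$ or $\{0\}\times T_{\pi_2(p)}\bb{H}^2$ non-trivially). For family~(1) the image lies in a three-dimensional totally geodesic slice, so a dimension count forces the intersection with one of the horizontal distributions to be at least one-dimensional; for families~(2) and~(3) one of the coordinate vector fields lies by construction entirely in $T\bb{S}^2\times\{0\}$ or $\{0\}\times T\bb{H}^2$.

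For the ``only if'' direction I plan to exploit the K\"ahler functions. By Lemma~\ref{Csquared}, after possibly swapping the two factors, one may assume $C_1=C_2$; denote this common value by $C$. The relation $|\gamma_1|=|\gamma_2|$ from \eqref{eq:gammanorm} then holds globally, which simplifies the Frenet system of Proposition~\ref{TUsummary}. The argument is then driven by a case split on $C$. If $C^2\equiv 1$ on some open set, then $\gamma_1=\gamma_2=0$ there, and the Frenet equations together with the characterization \eqref{eq:defF} of $F$ show that $T\Sigma$ is preserved by $F$; Proposition~\ref{CMCtu} then places $\Sigma$ inside a totally geodesic $\bb{S}^2\times\bb{R}$ or $\bb{H}^2\times\bb{R}$, yielding family~(1). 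If $C\equiv 0$ on some open set, the identities $\phi^*\omega_{\bb{S}^2}=\tfrac{C_1+C_2}{2}\omega_\Sigma$ and $\psi^*\omega_{\bb{H}^2}=\tfrac{C_1-C_2}{2}\omega_\Sigma$ derived in the proof of Lemma~\ref{Csquared} show that both projections $\phi,\psi$ have rank at most one; hence $\Sigma$ is locally a product of curves whose curvatures are forced to be constant by the PMC hypothesis, giving family~(2).

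It remains to treat the open set where $0<C^2<1$. Here all of $\gamma_1,\gamma_2,f_1,f_2$ are generically non-zero. Following \cite[proof of Theorem 3]{TorralboUrbano}, one uses \eqref{eq:DCj&Dgammaj}, \eqref{eq:Dfj} and the relation $|\gamma_1|=|\gamma_2|$ to reduce the Frenet data to a single ODE for a function of one variable, and then identifies the resulting immersion with one of the examples of Proposition~\ref{SpecialSurfaces1} or Proposition~\ref{SpecialSurfaces2} by matching parameters. The main obstacle, and the point where the proof diverges from \cite{TorralboUrbano}, is precisely this generic case: in the $M^2(\epsilon)\times M^2(\epsilon)$ setting two independent holomorphic Hopf differentials provide enough algebraic constraints on $f_1,f_2,\gamma_1,\gamma_2$, whereas here one has only the single differential \eqref{eq:HopfAlternative}. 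The necessary replacement is the relation $C_1=\pm C_2$ coming from Lemma~\ref{Csquared}, which has to be combined with \eqref{eq:HopfAlternative} to recover the algebraic identities needed to close the ODE system. Verifying this substitution, and in particular confirming that the reduced ODE coincides with \eqref{hEquation1} or \eqref{hEquation2}, is the step that requires the ``small but necessary adaptation'' mentioned in the introduction.
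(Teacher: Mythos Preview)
Your case split on the values of $C$ is the wrong dichotomy, and this causes you to miss family~(1) entirely. First, the case $C^2\equiv 1$ on an open set is vacuous for a PMC surface: if $C_1=C_2=\pm 1$ then $\gamma_1=\gamma_2=0$ by \eqref{eq:gammanorm}, and the second equation in \eqref{eq:DCj&Dgammaj} gives $0=\dzbar\gamma_j=-i\|H\|e^{2u}C_j/\sqrt{2}\neq 0$, a contradiction. So your proposed mechanism for producing family~(1) never fires. Second, CMC surfaces in $\bb{S}^2\times\bb{R}$ or $\bb{H}^2\times\bb{R}$ generically have $0<C^2<1$ (for instance, an Abresch--Rosenberg rotational sphere has $C$ varying continuously), so family~(1) falls into what you call the ``generic'' case and must be separated out there, not in a $C^2\equiv 1$ branch. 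Invoking Proposition~\ref{CMCtu} is also off target: that result classifies totally umbilical hypersurfaces, not surfaces whose tangent plane is $F$-invariant.

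The correct organizing object, as in \cite{TorralboUrbano} and in the paper's proof, is the holomorphic $1$-form $\Omega=(\gamma_1-\gamma_2)\,dz$, which is well defined once $C_1=C_2$. The dichotomy is: either $\Omega\equiv 0$, i.e.\ $\gamma_1=\gamma_2$, which forces the surface into a totally geodesic $\bb{S}^2\times\bb{R}$ or $\bb{H}^2\times\bb{R}$ (this is family~(1)); or $\Omega\not\equiv 0$, in which case one normalizes $z$ so that $\gamma_2-\gamma_1$ is the constant $2\sqrt{2}\|H\|$, writes $\gamma_1=-\sqrt{2}\|H\|+ig$, $\gamma_2=\sqrt{2}\|H\|+ig$, and shows that all data depend on $x$ only. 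Within this branch, $C\equiv 0$ on an open set yields family~(2), and otherwise $\bar f_1=f_2$ leads to families~(3). You have also misidentified the ``small but necessary adaptation'': it has nothing to do with the Hopf differential \eqref{eq:HopfAlternative} (which need not vanish in Theorem~\ref{MainClassification}), but is the derivation of the ODE $u'=\bigl(g/(\sqrt{2}\|H\|)+\tilde\mu\bigr)C_1$ for the conformal factor. In \cite{TorralboUrbano} this comes from the two Hopf differentials; here one instead uses $\bar f_1=f_2$ together with \eqref{eq:Dfj} to conclude $(f_1)_z=(f_1)_{\bar z}$, so $f_1$ depends only on $x$, and then reads off $m'=\tfrac12 g'$ from the real and imaginary parts of $f_1'$ and $\gamma_1'$ to obtain the constant $\tilde\mu$.
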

\begin{proof}
	We only give a brief explanation, as the majority of the proof is essentially the same as that of \cite{TorralboUrbano}, Theorem 3. Particularly, the derivation for the derivative of the conformal factor $e^u$ doesn't readily generalize to our setting, as it depends on the two Hopf differentials. However, one can obtain the same equation from different techniques, as will be explained below.
	
	Assume that $\Sigma$ is orientable by using the two-fold oriented cover if necessary.
	
	By Lemma \ref{Csquared}, there are 2 cases, namely the case where $C_1 = C_2$ and the case where $C_1 = -C_2$ everywhere. We consider $C_1 = C_2$, as $C_1 = -C_2$ is analogous.
	
	The case that requires a modification from \cite{TorralboUrbano} is when the holomorphic differential $\Omega = (\gamma_1 - \gamma_2)dz$ is non-vanishing.
	Normalize $\Omega$ to get $\gamma_2 - \gamma_1 = 2\sqrt{2}\norm{H}$. By equation \eqref{eq:gammanorm}, the norms of $\gamma_1$ and $\gamma_2$ satisfy $|\gamma_1| = |\gamma_2|$, so that there is a real function $g\colon\Sigma\to \bb{R}$ such that
	$$\gamma_1 = -\sqrt{2}\norm{H} + ig, \qquad  \gamma_2 = \sqrt{2}\norm{H} + ig.$$
	Like in \cite{TorralboUrbano}, the function $g$, the K\"ahler functions $C_j$ and the conformal factor $e^{2u}$ are independent of the $y$ coordinate, and $C_1(\bar{f}_1-f_2) = 0$.
	Hence, either $C_1 = C_2 = 0$ in an open neighborhood or the zeros of $C_1$ are isolated and then $\bar{f}_1 = f_2$ everywhere.
	In case $C_1 = C_2 = 0$, $\Sigma$ is a product of curves, so it is an example from Proposition \ref{curveProduct}.
	
	In the other case, the equation $\bar{f}_1 = f_2$ holds everywhere. Our aim is to show that there is a constant $\tilde{\mu}$ such that $u'$, the $x$-derivative of $u$, satisfies $u' = \left(\frac{g}{\sqrt{2}\norm{H}} + \tilde{\mu} \right)C_1$, which is the part of the proof of \cite[Theorem 3]{TorralboUrbano} that does not easily carry over to our setting. Remark that by equations \eqref{eq:Dfj},
	$(f_1)_z = \overline{(\bar{f}_1)_{\zbar}} = \overline{(f_2)_{\zbar}} = -i\frac{e^{2u}}{4}C_1\bar{\gamma}_2 = i\frac{e^{2u}}{4} C_2\gamma_1 = (f_1)_{\zbar}. $
	Thus, $f_1$ and $f_2$ are independent of the $y$ coordinate. Write the $x$-derivative as $'$. Further, write $f_1 = l + im$, where $l,m\colon\Sigma\to \bb{R}$ are real functions. Then the $x$-derivative of $f_1$ is
		$$l'+im' = f_1' = 2(f_1)_{\zbar} = -i\frac{\norm{H}}{\sqrt{2}}e^{2u}C_1-\frac{e^{2u}C_1g}{2} ,$$
	while the $x$-derivative of $g$ is
		$$g' = -i\gamma_1' = -2i(\gamma_1)_{\zbar} = -\sqrt{2}\norm{H}e^{2u}C_1.$$
	The above equations imply that $2m' = g'$. Therefore, there is a real constant $\mu$ such that $m = \frac{g}{2} + \mu$. By using equations \eqref{eq:DCj&Dgammaj} and considering the real part of $(\gamma_j)_z = (\gamma_j)_{\zbar}$, one can then calculate that
	$$u' = \left(\frac{g}{\sqrt{2}\norm{H}} + \tilde{\mu} \right)C_1,$$
	where $\tilde{\mu} = \sqrt{2}\mu/\|H\|$, which is constant.
	From this point onward, it is straightforward to replicate the integration of the Frenet equation as done in \cite[Theorem 3]{TorralboUrbano}. In case $C_1 = C_2$, one finds the examples from Proposition \ref{SpecialSurfaces1}. In case $C_1 = -C_2$, one finds the ones from Proposition \ref{SpecialSurfaces2} instead.
\end{proof}
One can now use Theorem \ref{nonTrivialIntersection} to find a classification of the PMC surfaces with vanishing Hopf differential $\Theta$.
\begin{corollary}\label{trivialHopf}
	Let $\Phi\colon \Sigma \to \sh$ be a PMC surface. The Hopf differential $\Theta$ vanishes if and only if $\Phi$ is locally congruent to
	\begin{enumerate}
		\item A CMC surface of $\bb{S}^2\times \bb{R}$ or $\bb{H}^2\times \bb{R}$ whose Abresch-Rosenberg differential vanishes, see Proposition \ref{trivialExamples}
		\item A product of curves $\alpha\colon I\subseteq\bb{R}\to\bb{S}^2$ and $\beta\colon I'\subseteq\bb{R}\to\bb{H}^2$ with constant curvatures $k_\alpha,k_\beta$ satisfying $k_\beta^2 = 1+k_\alpha^2$, see Proposition \ref{curveProduct}
		\item One of the Examples from Propositions \ref{SpecialSurfaces1} or \ref{SpecialSurfaces2} which satisfies $a = 1+c^2$.
	\end{enumerate}
\end{corollary}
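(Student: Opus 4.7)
The plan is to combine the structural result of Theorem \ref{nonTrivialIntersection} with the classification in Theorem \ref{MainClassification} and with the explicit Hopf differential computations already recorded in the example propositions. This makes the corollary essentially a bookkeeping exercise once the two main theorems are in place, so I do not expect any serious obstacle.

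For the \emph{if} direction I would simply cite each of the example propositions. Proposition \ref{trivialExamples} gives $\Theta = 4\Theta_{\text{AR}}$, so CMC surfaces of $\bb{S}^2 \times \bb{R}$ or $\bb{H}^2 \times \bb{R}$ with vanishing Abresch--Rosenberg differential yield surfaces in $\sh$ with $\Theta \equiv 0$. Proposition \ref{curveProduct} explicitly states that for a product of curves with constant curvatures $k_\alpha$, $k_\beta$, the differential $\Theta$ vanishes precisely when $k_\beta^2 = 1 + k_\alpha^2$. Propositions \ref{SpecialSurfaces1} and \ref{SpecialSurfaces2} likewise state that for the more involved examples, $\Theta \equiv 0$ if and only if $a = 1 + c^2$. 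So each member of the three listed families has $\Theta \equiv 0$.

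For the \emph{only if} direction I would proceed as follows. Suppose $\Phi$ is a PMC surface with $\Theta \equiv 0$. By Theorem \ref{nonTrivialIntersection}, at every point $p \in \Sigma$ the tangent space $T_p\Sigma$ has non-trivial intersection with $T_{\pi_1(p)}\bb{S}^2 \times \{0\}$ or $\{0\} \times T_{\pi_2(p)}\bb{H}^2$. Consequently $\Sigma$ belongs to the largest family in Remark \ref{inclusions}, so Theorem \ref{MainClassification} applies and $\Phi$ is locally congruent to one of the three families listed there. Within each family, $\Theta \equiv 0$ forces exactly the additional constraint appearing in our statement: for family (1) it is the vanishing of $\Theta_{\text{AR}}$ (by $\Theta = 4\Theta_{\text{AR}}$ in Proposition \ref{trivialExamples}), for family (2) it is $k_\beta^2 = 1 + k_\alpha^2$ (Proposition \ref{curveProduct}), and for family (3) it is $a = 1 + c^2$ (Propositions \ref{SpecialSurfaces1} and \ref{SpecialSurfaces2}). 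Combining both implications yields the corollary.

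The only matter deserving a comment is that the Hopf differential in each of the propositions was computed with respect to a particular local conformal coordinate; since $\Theta$ is a globally defined holomorphic quadratic differential and its vanishing at one point with respect to one coordinate is equivalent to its vanishing in every coordinate, no subtlety arises in transferring the pointwise identities $\Theta = 4\Theta_{\text{AR}}$, $\Theta = \tfrac12(k_\alpha^2 - k_\beta^2 + 1)\,dz\otimes dz$, and $\Theta = \tfrac{b}{2}(a - 1 - c^2)\,dz\otimes dz$ to the global statement $\Theta \equiv 0$. Hence there is no genuine hard step in the proof, only the verification that Theorem \ref{nonTrivialIntersection} and Theorem \ref{MainClassification} are applied to the correct subclass.
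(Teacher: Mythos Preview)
Your proposal is correct and follows exactly the same approach as the paper's proof, which simply states that the corollary is a consequence of Theorem~\ref{MainClassification} after using Theorem~\ref{nonTrivialIntersection}, with the differentials having been discussed in Propositions~\ref{trivialExamples}--\ref{SpecialSurfaces2}. Your write-up is in fact more detailed than the paper's, spelling out both implications and the coordinate-independence remark, but the logical structure is identical.
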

\begin{proof}
	This is a consequence of Theorem \ref{MainClassification} after using Theorem \ref{nonTrivialIntersection}. The differentials were discussed in Propositions \ref{trivialExamples}--\ref{SpecialSurfaces2}.
\end{proof}

One can also determine the Lagrangian PMC surfaces of $\sh$. The result is analogous to Theorem 2 in \cite{TorralboUrbano}, but the proof is remarkably easier. This is because of equations \eqref{eq:Dfj}, which are different from those in \cite{TorralboUrbano}. Particularly, in our setting, $C_2$ appears as a factor in $\dzbar f_1$.

\begin{proposition}
	An immersion of a surface $\Phi\colon \Sigma \to \sh$ is PMC and Lagrangian with respect to one of $J_1 = (J_{\mathbb{S}^2},J_{\mathbb{H}^2})$ or $J_2 = (J_{\mathbb{S}^2},-J_{\mathbb{H}^2})$ if and only if $\Phi$ is locally congruent to a product of curves with constant curvature.
\end{proposition}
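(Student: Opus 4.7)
The plan is to recast the Lagrangian condition in terms of the K\"ahler functions $C_1,C_2$ of Proposition \ref{TUsummary}, show that vanishing of one of them forces vanishing of the other, then identify the surface as a product of curves whose curvatures are forced to be constant by the PMC hypothesis. For the converse direction, a product of curves has tangent directions lying in $T\bb{S}^2\times\{0\}$ and $\{0\}\times T\bb{H}^2$, so both $\omega_1$ and $\omega_2$ restrict to zero on $T\Sigma$; hence $C_1=C_2=0$, so the surface is Lagrangian with respect to both $J_1$ and $J_2$, while the PMC property is Proposition \ref{curveProduct} provided the curvatures are constant and not both zero.

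For the forward direction, assume without loss of generality that $\Phi$ is Lagrangian with respect to $J_1$; by the definition \eqref{eq:defKahler} this is equivalent to $C_1\equiv 0$, and the case of $J_2$ is symmetric in the subscripts. The core step is to show $C_2\equiv 0$. Plugging $C_1\equiv 0$ into \eqref{eq:DCj&Dgammaj} gives $\dzbar\gamma_1=0$, so $\gamma_1$ is holomorphic, while $|\gamma_1|^2 = e^{2u}/2>0$ by \eqref{eq:gammanorm}, so $\gamma_1$ is nowhere zero. The equation $\dz C_1=0$ from the same system determines $f_1$ algebraically; solving and simplifying with $|\gamma_1|^2=e^{2u}/2$ yields $f_1 = \norm{H}\gamma_1^2/\sqrt{2}$. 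Since $\norm{H}$ is constant, this expression is holomorphic in $z$, so $\dzbar f_1=0$. But by \eqref{eq:Dfj}, $\dzbar f_1 = ie^{2u}C_2\gamma_1/4$, and with $\gamma_1$ nowhere zero we conclude $C_2\equiv 0$.

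With $C_1=C_2=0$, the computation inside the proof of Lemma \ref{Csquared} yields $\phi^*\omega_{\bb{S}^2}=0$ and $\psi^*\omega_{\bb{H}^2}=0$, where $\phi=\pi_1\circ\Phi$ and $\psi=\pi_2\circ\Phi$. Thus $d\phi$ and $d\psi$ have rank at most one, and since $\Phi$ is an immersion, both have rank exactly one with transverse complementary kernels in $T\Sigma$. The two one-dimensional distributions are automatically integrable, so local coordinates $(t,s)$ adapted to $\ker d\phi$ and $\ker d\psi$ give $\Phi(t,s)=(\alpha(t),\beta(s))$. A direct computation of $\nabla^\perp H$ in these coordinates --- using that $\alpha$ depends only on $t$ and $\beta$ only on $s$, so that the normal vectors built from each factor are parallel in the direction of the other --- forces $\kappa_\alpha'=\kappa_\beta'=0$. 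The only nontrivial step is the passage $C_1\equiv 0\Rightarrow C_2\equiv 0$; as highlighted in the remark preceding the statement, this is considerably easier than in \cite{TorralboUrbano} precisely because the factor $C_2$ appears explicitly in $\dzbar f_1$ in \eqref{eq:Dfj}, so that once $\dzbar f_1=0$ is known the conclusion is immediate.
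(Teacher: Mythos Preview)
Your proof is correct and follows essentially the same route as the paper's: both translate the Lagrangian condition to $C_1\equiv 0$, derive $f_1=\norm{H}\gamma_1^2/\sqrt{2}$ from $\dz C_1=0$ via \eqref{eq:DCj&Dgammaj}, observe that $\dzbar f_1=0$ (you phrase this as holomorphicity of $\gamma_1$, the paper computes it directly), and then read off $C_2\equiv 0$ from \eqref{eq:Dfj}. You supply more detail than the paper on the final step---the rank/integrability argument for obtaining a local product structure and the verification that PMC forces constant curvatures---whereas the paper simply asserts that $C_1=C_2=0$ implies a product of curves via Lemma~\ref{Csquared}; this is a harmless elaboration rather than a different argument.
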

\begin{proof}
	The surface being Lagrangian with respect to $J_j$ is equivalent to the vanishing of the K\"ahler function $C_j$. It is clear that a product of curves is Lagrangian with respect to both $J_1, J_2$. \\
	Conversely, suppose that a surface $\Phi\colon \Sigma \to \sh$ is Lagrangian with respect to one of the $J_j$, that is, one of the $C_j$ is vanishing. Assume that $C_1$ vanishes, the other case is similar. In case $C_1$ vanishes, so does its derivative. By equation \eqref{eq:DCj&Dgammaj}, $2\sqrt{2}e^{-2u}\overline{\gamma_1}f_1 = \norm{H}\gamma_1$. Multiplying this equation by $\gamma_1$ yields $f_1 = \frac{\norm{H}}{\sqrt{2}}\gamma_1^2$, as $|\gamma_1|^2 = \frac{e^{2u}}{2}(1-C_1^2) = \frac{e^{2u}}{2}$.
	Using equations \eqref{eq:DCj&Dgammaj} and \eqref{eq:Dfj}, it follows that $i\frac{e^{2u}}{4}\gamma_1 C_2 = 0$. Since $|\gamma_1|^2=e^{2u}/2$, this implies that $C_2 = 0$ as well. \\
	Thus, a PMC surface is Lagrangian with respect to one of the $J_j$ if and only if it is Lagrangian with respect to both. When $C_1,C_2=0$, then the proof of Lemma \ref{Csquared} implies $\phi^*\omega_{\mathbb{S}^2}$ and $\psi^*\omega_{\mathbb{H}^2}$ both vanish. Hence, the surface is a product of curves. 
\end{proof}

\end{document}